\documentclass[a4paper]{amsart}
\usepackage{amssymb}
\usepackage{hyperref}
\usepackage{stmaryrd}

\newtheorem{theorem}{Theorem}[section]
\newtheorem{lemma}[theorem]{Lemma}

\newtheorem{proposition}[theorem]{Proposition}
\newtheorem{conjecture}[theorem]{Conjecture}

\theoremstyle{definition}

\newtheorem{hypothesis}[theorem]{Hypothesis}

\newtheorem{question}[theorem]{Question}
\newtheorem*{notation*}{Notation}
\newtheorem*{problem*}{Problem}

\newcommand{\Aut}{\mathop{\mathrm{Aut}}}
\newcommand{\Cay}{\mathop{\mathrm{Cay}}}
\newcommand{\Sym}{\mathop{\mathrm{Sym}}}
\newcommand{\Alt}{\mathop{\mathrm{Alt}}}
\def\K{{\rm K}}
\def\C{{\rm C}}
\def\Z{{\rm Z}}
\newcommand{\ZZ}{\mathbb{Z}}

\newcommand{\SL}{\mathrm{SL}}
\newcommand{\PGL}{\mathrm{PGL}} 
\newcommand{\GL}{\mathrm{GL}} 
\newcommand{\PSL}{\mathrm{PSL}}
\newcommand{\PSU}{\mathrm{PSU}}
\newcommand{\PSp}{\mathrm{PSp}}
\newcommand{\PSigmaL}{\mathrm{P\Sigma L}} 
\newcommand{\PGammaL}{\mathrm{P\Gamma L}}
\newcommand{\POmega}{\mathrm{P\Omega}}

\def\J{{\rm J}}
\def\M{{\rm M}}

\newcommand{\norml}{\trianglelefteqslant}

\renewcommand{\geq}{\geqslant}
\renewcommand{\leq}{\leqslant}

\begin{document}
\title[Cubic arc-transitive $k$-circulants]{Cubic arc-transitive $k$-circulants}

\author[Michael Giudici, Istv\'{a}n Kov\'{a}cs, Cai Heng Li, Gabriel Verret]{Michael Giudici, Istv\'{a}n Kov\'{a}cs, Cai Heng Li, Gabriel Verret}

 \address{Michael Giudici, Cai Heng Li and Gabriel Verret, 
\newline\indent Centre for the Mathematics of Symmetry and Computation, 
\newline\indent The University of Western Australia, 
\newline\indent 35 Stirling Highway, Crawley, WA 6009, Australia.} 

 \address{Istv\'{a}n Kov\'{a}cs and Gabriel Verret,
\newline\indent IAM and FAMNIT, University of Primorska, Glagolja\v{s}ka 8, SI-6000 Koper, Slovenia.\newline}

\email{Michael.Giudici@uwa.edu.au}
\email{Istvan.Kovacs@upr.si}
\email{Cai.Heng.Li@uwa.edu.au}
\email{Gabriel.Verret@uwa.edu.au}

\thanks{This research was supported by the Australian Research Council grants DE130101001 and DP150101066. The second author was supported in part by the Slovenian Research  Agency (research program P1-0285 and research projects N1-0032,  N1-0038, J1-5433, and J1-6720) and would also like to thank The University of Western Australia for its hospitality.}

\subjclass[2010]{20B25, 05E18} 

\keywords{Cubic graph, arc-transitive, circulant}

\begin{abstract}
For an integer $k\geq 1$, a graph is called a \emph{$k$-circulant} if its automorphism group contains a cyclic semiregular subgroup with $k$ orbits on the vertices. We show that, if $k$ is even, there exist infinitely many cubic arc-transitive $k$-circulants. We conjecture that, if $k$ is odd, then  a cubic arc-transitive $k$-circulant has order at most $6k^2$. Our main result is a proof of this conjecture when $k$ is squarefree and coprime to $6$.
\end{abstract}
\maketitle
\section{Introduction}
All graphs in this paper are finite, simple and connected. A permutation group is called \emph{semiregular} if its only element fixing a point is the identity. For an integer $k\geq 1$, a graph is called a \emph{$k$-circulant} if its automorphism group contains a cyclic semiregular subgroup with $k$ orbits on the vertices.

Clearly, every graph is a $k$-circulant for some $k$, for example if $k$ is the order of the graph. Moving beyond this trivial observation is often quite difficult: whether every vertex-transitive graph is a $k$-circulant for some other $k$ is a famous open problem (see \cite{Cameron,maru81}). This question has been settled in the affirmative for graphs of valency at most four \cite{Dobson,MaruScap}.

On the other hand, studying $k$-circulants for fixed $k$ often yields interesting results. For example, $1$-circulants, usually called simply \emph{circulants}, are exactly Cayley graphs on cyclic groups. These graphs have been intensively studied. The family of $2$-circulants (sometimes called \emph{bicirculants}) has also attracted some attention.

In many cases, additional symmetry conditions are imposed on the graphs. In particular, cubic arc-transitive $k$-circulants have been the focus of some recent investigation.  (A graph is called \emph{arc-transitive} if its automorphism group acts transitively on ordered pairs of adjacent vertices. A graph is \emph{cubic} if each of its vertices has degree $3$.) 
It is a rather easy exercise to show that a cubic arc-transitive circulant is isomorphic to either $\K_4$ or $\K_{3,3}$.  The classification of cubic arc-transitive bicirculants can be deduced from \cite{FGW,MP,pisanski}, while cubic arc-transitive $k$-circulants for $k\in\{3,4,5\}$ are classified in \cite{Pentacirculant,Tricirculant}.  Rather than describe these classifications in detail, we would simply like to point out one striking feature: for $k=2$ or $k=4$, there exist infinitely many cubic arc-transitive $k$-circulants, whereas for $k\in\{1,3,5\}$, there are only finitely many. This immediately suggests the following question.

\begin{question}\label{mainquestion}
Given a positive integer $k$, does there exist infinitely many cubic arc-transitive $k$-circulants?
\end{question}

Investigating this question is the main topic of this paper. Our first result is that the answer is positive when $k$ is even.

\begin{proposition}\label{theorem:even}
If $k$ is an even positive integer, then there exist infinitely many cubic arc-transitive $k$-circulants.
\end{proposition}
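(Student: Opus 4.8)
The plan is to reduce the statement to a construction of regular cyclic covers of small cubic arc-transitive graphs. We first record an elementary observation: if $\Gamma$ admits a semiregular cyclic subgroup $C\le\Aut(\Gamma)$ with $j$ orbits, then for every divisor $d$ of $|C|$ the subgroup of $C$ of index $d$ is again semiregular and has $jd$ orbits, so $\Gamma$ is also a $jd$-circulant. Thus, writing $k=2\ell$, it suffices to produce, for each fixed $\ell$, infinitely many cubic arc-transitive graphs carrying a semiregular cyclic subgroup with exactly $2\ell$ orbits; equivalently (using the observation with $j=2$), infinitely many cubic arc-transitive bicirculants whose semiregular cyclic group has order divisible by $\ell$.

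For those $\ell$ which are realisable as the cyclic parameter of a member of the known infinite family of cubic arc-transitive bicirculants, this is immediate: take such a member with cyclic parameter a multiple of $\ell$, and let it grow. The substance is the remaining $\ell$ — in particular every even $\ell$, which is exactly why the cases $k=2,4$ of the statement already needed the cited classifications. For these I would fix a single cubic arc-transitive graph $\Gamma_0$ that is already a $2\ell$-circulant, with semiregular cyclic subgroup $C_0$ of order coprime to $\ell$ (drawn from the bicirculant family when available, and otherwise from the existing finite lists — e.g.\ $Q_3$ for $k=4$, the Pappus graph for $k=6$ — or from a bounded cover of such), and take regular $\ZZ_n$-covers $\Gamma_n\to\Gamma_0$ with $\gcd(n,|C_0|)=1$. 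If the cover is arranged so that $C_0$ lifts to a subgroup $\widetilde C_0$ of the same order which centralises the deck group $D\cong\ZZ_n$, then $\langle\widetilde C_0,D\rangle$ is cyclic of order $n|C_0|$ and semiregular — a power of it fixing a vertex projects to a power of $C_0$ fixing a vertex, hence is trivial — with exactly $2\ell$ orbits on the $n|V\Gamma_0|$ vertices of $\Gamma_n$. Each arc-transitive $\Gamma_n$ is then a cubic arc-transitive $2\ell$-circulant, and letting $n\to\infty$ finishes.

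The step I expect to be the main obstacle is ensuring that infinitely many of these covers can be taken arc-transitive without destroying $C_0$. By the standard lifting criterion this means choosing the covering voltages so that an arc-transitive subgroup $G\le\Aut(\Gamma_0)$ containing $C_0$ lifts: the voltage class in $H^1(\Gamma_0;\ZZ_n)$ must be fixed by $G$ modulo coboundaries and modulo the $\Aut(\ZZ_n)$-twist, with $C_0$ acting trivially on $\ZZ_n$ — i.e.\ one needs, for infinitely many $n$, a one-dimensional quotient of the cycle space $H_1(\Gamma_0;\ZZ_n)$ as a $G$-module with $C_0$ in its kernel. The existence of such quotients is governed by the $G$-module structure of the cycle space of $\Gamma_0$, hence by the arithmetic of $\ell$: for $\gcd(\ell,6)=1$ one can argue via semisimplicity and a character computation, while the primes $2$ and $3$ dividing $\ell$ require a separate, more explicit analysis. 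Supplying, uniformly in even $k$, a base graph $\Gamma_0$ together with a compatible infinite family of admissible voltages is thus the heart of the matter; the covering bookkeeping of the second paragraph then delivers the proposition.
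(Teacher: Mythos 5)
Your covering-space bookkeeping in the second paragraph is sound (the index-$d$ subgroup observation, the semiregularity of $\langle\widetilde C_0,D\rangle$, the orbit count), but the proposal has a genuine gap, and it sits exactly where you say you expect the obstacle to be: you never actually produce, for a general even $k=2\ell$, either the base graph $\Gamma_0$ or the infinite family of admissible voltages. For the base graph you offer only ad hoc examples for $k\leq 6$ ($Q_3$, the Pappus graph) plus an appeal to ``the existing finite lists'', which in the literature stop at $k=5$; for general $\ell$ the existence of even one cubic arc-transitive $2\ell$-circulant with a cyclic group of order coprime to $\ell$ is not known in advance and is essentially the content of the proposition. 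For the voltages, the assertion that the cycle space of $\Gamma_0$ has, for infinitely many $n$, a one-dimensional $G$-invariant quotient over $\ZZ_n$ with $C_0$ in its kernel is stated but not proved, and it genuinely depends on the $G$-module structure of a base graph you have not fixed. So the argument reduces the proposition to two unproved claims rather than establishing it.

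The paper avoids all of this with a single uniform construction: for $k=2m$ and any prime $p\equiv 1\pmod 3$ with $p\nmid m$, it builds a Cayley graph on an index-$1$ or index-$3$ subgroup $R$ of the generalised dihedral group over $\ZZ_m^2\times\ZZ_p$, using a root of $x^2+x+1$ modulo $p$ to define an order-$3$ automorphism $y$ whose orbit $S=\{s,s^y,s^{y^2}\}$ is the connection set; arc-transitivity is then free (since $y$ fixes the identity and permutes $S$), connectedness and the existence of a semiregular cyclic subgroup with exactly $2m$ orbits are verified by direct computation inside $R$, and infinitude comes from Dirichlet applied to primes $p\equiv 1\pmod 3$. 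In effect the paper supplies, explicitly and in one stroke, precisely the ``base graph plus infinite family'' that your outline defers. If you want to salvage your route, the honest course is to carry out the explicit construction anyway, at which point the covering formalism becomes an unnecessary detour; I would recommend comparing your intended family with the generalised-dihedral Cayley graphs above, which are close to the hexagonal-torus bicirculant family you allude to in your first paragraph.
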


We then turn our attention to the case when $k$ is odd. We are unable to settle Question~\ref{mainquestion} in full generality in this case, but we prove the following, which is our main result.

\begin{theorem}\label{theorem:squarefree}
If $k$ is a squarefree positive integer coprime to $6$, then  a cubic arc-transitive $k$-circulant has order at most $6k^2$.
\end{theorem}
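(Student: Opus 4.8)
Throughout, write $G=\Aut(\Gamma)$ and let $C\leq G$ be cyclic and semiregular with exactly $k$ orbits on $V\Gamma$, so that $|V\Gamma|=k|C|$; it suffices to prove $|C|\leq 6k$. Two elementary observations start things off. First, since $\Gamma$ is cubic, $|V\Gamma|$ is even, and as $k$ is odd this forces $|C|$ to be even; in particular $C$ contains an involution $\tau$, and semiregularity makes $\tau$ fixed-point-free on $V\Gamma$. Second, by Tutte's theorem $G$ is $s$-arc-regular for some $s\leq 5$, so $|G|=2^{s-1}\cdot 3\cdot k\cdot|C|$. Comparing $p$-parts in this identity, and crucially using that $k$ is \emph{squarefree} and coprime to $6$, one finds that for every prime $p\nmid 6k$ the $p$-part of $|G|$ equals that of $|C|$; since $|C|$ is cyclic, the Sylow $p$-subgroup of $G$ is then cyclic for every prime $p\nmid 6k$. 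In the arc-regular case $s=1$ the same computation shows the Sylow $2$-subgroup of $G$ is cyclic, whence by Burnside's normal $p$-complement theorem $G$ has a normal $2$-complement of odd order and hence (by Feit--Thompson) is solvable; a central part of the plan is to propagate this solvability conclusion, or something close to it, to all values of $s$.

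The next ingredient is the normal-quotient reduction familiar from the theory of arc-transitive cubic graphs: if $N\trianglelefteq G$ is semiregular on $V\Gamma$ with at least three orbits, then $\Gamma\to\Gamma_N$ is a connected regular cover onto a cubic $(G/N)$-arc-transitive graph, the image of $C$ in $G/N$ is cyclic and semiregular with again $k$ orbits, and so $\Gamma_N$ is again a cubic arc-transitive $k$-circulant, of smaller order. The hard point here is that the naive induction fails: it would only give $|V\Gamma_N|\leq 6k^2$, whereas we need $|V\Gamma|=|N|\cdot|V\Gamma_N|\leq 6k^2$. My plan is to induct on $|C|$ rather than on $|V\Gamma|$ and to control how $|C|$ distributes over the cover (bounding $|N\cap C|$ and the kernel of the $G$-action on $\Gamma_N$), or else to strengthen the inductive statement so as to also bound any normal semiregular subgroup contained in $C$; the cyclic-Sylow information above is what should keep $N\cap C$ small. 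Iterating the reduction — and separately dealing with normal subgroups having exactly two orbits, i.e.\ the bipartite case — one arrives at the \emph{basic} case, where $G$ has no nontrivial normal subgroup that is semiregular on $V\Gamma$.

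In the basic case I would invoke the O'Nan--Scott description of $G$ via a minimal normal subgroup $M$. If $G$ is solvable, $M$ is elementary abelian and must be transitive (or have two orbits), which makes $\Gamma$ a cubic arc-transitive Cayley graph on an elementary abelian $p$-group; the connection-set condition $S=-S$ together with squarefreeness of $k$ then collapses this to a very short explicit list such as $\K_4$ and $\K_{3,3}$ (and covers like the cube reduce further), all satisfying the bound with room to spare. If $G$ is non-solvable, the Sylow constraints above force $G$ to be almost simple (the affine type yields only $\K_4$, and the remaining O'Nan--Scott types do not arise at valency $3$); here the fixed-point-free involution $\tau$ is the key, since it forbids the relevant involutions from lying in a point stabiliser, which I aim to parlay into the vertex stabiliser of the socle having odd order $\leq 3$, hence arc-regularity of the socle — at which point the Sylow-$2$ argument of the first paragraph applies to the socle and contradicts its simplicity, so no non-solvable basic example survives beyond the degenerate circulant cases. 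Reading off $|C|\leq 6k$ in the surviving configurations, where the factor $6=2\cdot3$ reflects the valency-$3$ vertex action and the bipartite doubling occurring for some of these graphs, then completes the argument. The two points I expect to be genuinely delicate are the book-keeping in the covering step so that the quadratic bound is not lost, and verifying that the Sylow-$2$ argument really does eliminate every non-solvable possibility once the edge stabiliser is non-trivial ($s\geq2$), where the Sylow $2$-subgroup of $G$ is no longer cyclic but only has a cyclic subgroup of small index.
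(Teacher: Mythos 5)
Your opening reductions are sound and match the paper's starting point: $|V\Gamma|=k|C|$, so the goal is $|C|\leq 6k$; $C$ contains a fixed-point-free involution; $|G|=3\cdot 2^{t}k|C|$; and the $p$-part comparison gives strong Sylow constraints. But the proposal never supplies the step that actually produces the bound $|C|\leq 6k$. Knowing that $P_p$ is \emph{cyclic} for $p\nmid 6k$ does not bound $|C_p|$: to get $|C|\leq 6k$ you must show $|C|_p=1$ for every prime $p\nmid 6k$ and $|C|_p\leq p$ for every prime $p\mid k$, and nothing in your sketch does this. The paper's mechanism here is its Lemma~\ref{IstvanLemma2}: for a normal Sylow $p$-subgroup $P_p$ with $p\geq 5$ dividing $|V\Gamma|$, the involution $c\in C$ cannot centralise $P_p$ (else a normal $p'$-subgroup would contain $c$ and hence, by Lemmas~\ref{IstvanLemma} and~\ref{lem:edge-rev}, be vertex-transitive), and the structure of $p$-groups with a cyclic maximal subgroup then forces $|P_p:C_p|=p$ and $|C_p|\leq p$. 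This is propagated along a normal series obtained from the Chillag--Sonn theorem for Sylow-metacyclic soluble groups, not by the cover-induction you propose; indeed you yourself flag that your induction on $|C|$ across normal quotients is only a ``plan,'' and without a substitute for the lemma above the quadratic bound is out of reach.

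The non-soluble endgame is also not viable as stated. A single fixed-point-free involution in $C$ says nothing about whether \emph{other} involutions of the socle fix vertices (for $t\geq 1$ the vertex stabiliser always contains involutions), so you cannot conclude that the socle has odd vertex stabiliser, nor derive a contradiction with simplicity; the insoluble case does not disappear. What the paper actually does is pass to $\overline{G}=G/S$ ($S$ the soluble radical), show $\overline{G}$ is almost simple because its Sylow $2$-subgroup has a cyclic subgroup of index at most $2^t\leq 16$ (Lemma~\ref{thm:quosolradical}), and then classify \emph{all} almost simple groups containing a cyclic subgroup of even order and index dividing $3\cdot 2^tk$ (Proposition~\ref{allsimples}, Table~\ref{tab:ASposs}), including infinite families $\PSL(2,r)$, $\PGL(2,r)$, $\PSigmaL(2,r^2)$, $\PGammaL(2,r^2)$. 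These survivors are not eliminated; the order bound is verified for each row via the inequality $|\overline{G}|\geq 2^{t-1}|\overline{C}|^2|S|_{2'}$, which in turn requires controlling $|S|_{2'}$ through $G^\infty$ being quasisimple and the Schur multiplier of the socle. Your proposal contains neither this classification nor any mechanism for bounding the soluble radical, so the insoluble case is a genuine gap, not merely a delicate point.
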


We would like to note that the methods used in the proof of Theorem~\ref{theorem:squarefree} can, with some more effort, yield a complete classification of cubic arc-transitive $k$-circulants, for $k$ squarefree coprime to $6$. We also show that the bound of $6k^2$ in Theorem~\ref{theorem:squarefree} is best possible.

\begin{proposition}\label{prop:construction}
If $k$ is an odd positive integer, then there exists a cubic arc-transitive $k$-circulant of order $6k^2$
\end{proposition}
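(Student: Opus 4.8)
The plan is to construct, for each odd $k$, an explicit cubic graph $\Gamma_k$ on $6k^2$ vertices, verify that it is connected and arc-transitive, and exhibit in $\Aut(\Gamma_k)$ a cyclic semiregular subgroup of order $6k$ with exactly $k$ orbits on vertices. Since $\K_{3,3}$ is the only cubic arc-transitive graph on $6$ vertices, the case $k=1$ forces $\Gamma_1=\K_{3,3}$, and one wants a construction specialising to the description $\K_{3,3}=\Cay(\ZZ_6,\{\pm 1,3\})$. A convenient model is $V\Gamma_k=\ZZ_{6k}\times\ZZ_k$ with $\ZZ_{6k}$ acting by $(x,i)\mapsto(x+1,i)$: this action is automatically semiregular with the $k$ orbits $\ZZ_{6k}\times\{i\}$, so any graph on this vertex set invariant under it is already a $k$-circulant of order $6k^2$. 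Equivalently, $\Gamma_k$ should be realised as a connected regular cover of $\K_{3,3}$ with covering group of order $k^2$ (note that $\K_{3,3}$ is the only cubic arc-transitive graph whose order divides $6k^2$ for all odd $k$), or else be described directly, for instance as a coset graph.

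I would carry this out in four steps. First, write down the adjacency --- equivalently a voltage assignment on the arcs of $\K_{3,3}$ with values in an abelian group of order $k^2$ --- and check that $\Gamma_k$ is cubic and connected, the connectedness amounting to the voltages generating the covering group. Second, establish arc-transitivity. Third, locate the cyclic semiregular subgroup $C\le\Aut(\Gamma_k)$ of order $6k$. Fourth, check that $C$ has exactly $k$ orbits and that $\Gamma_k$ really has $6k^2$ vertices with no accidental identification; with arc-transitivity this gives Proposition~\ref{prop:construction}, and shows that the bound in Theorem~\ref{theorem:squarefree} is best possible.

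The crux is reconciling the second and third steps. On the one hand, the most naive choice of cover yields the M\"obius ladder on $6k^2$ vertices, the circulant $\Cay(\ZZ_{6k^2},\{\pm 1,3k^2\})$; since a cubic arc-transitive circulant must be $\K_4$ or $\K_{3,3}$, this graph is vertex-transitive but not arc-transitive once $k\ge 2$. On the other hand, for the covers whose arc-transitivity is easiest to see --- those in which an arc-transitive subgroup of $\Aut(\K_{3,3})$ lifts and accounts for all of $\Aut(\Gamma_k)$ --- the lift of a regular cyclic subgroup $\ZZ_6\le\Aut(\K_{3,3})$ is a split extension of the covering group by $\ZZ_6$, and a short computation shows that such a group need not contain an element of order $6k$, so that $C$ cannot lie inside it. Hence $\Gamma_k$ will not be a Cayley graph, and its arc-transitivity cannot simply be inherited from $\K_{3,3}$: one must exhibit a family of covers (or coset graphs) that acquires automorphisms beyond those forced by covering-space theory while at the same time carrying a cyclic semiregular subgroup of order exactly $6k$. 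Producing such a family, and then verifying all four properties, is where the real effort lies; the remaining checks are routine.
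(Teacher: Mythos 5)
The framework you set up---a connected regular cover of $\K_{3,3}$ with abelian covering group of order $k^2$, carrying a cyclic semiregular subgroup of order $6k$---is exactly the right one, and it matches the shape of the paper's example. But the proposal never actually produces the cover: no voltage assignment, coset graph or presentation is written down, and you explicitly defer the decisive step (``producing such a family \dots is where the real effort lies''). Since the entire content of the proposition is the existence of such a family, this is not a proof but a statement of the problem in different language; as it stands there is a genuine gap at the one place where something had to be exhibited.

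Moreover, the structural deductions you use to narrow the search are partly wrong, and they point you away from the construction that works. The paper's graph \emph{is} a Cayley graph: it is $\Cay(R,S)$ with $R=\ZZ_k^2\rtimes\Sym(3)$ of order $6k^2$ and $S$ the orbit of an involution $s=uvy$ under an order-$3$ automorphism $\sigma$ of $R$. It is a regular $\ZZ_k^2$-cover of $\K_{3,3}$, and its arc-transitivity \emph{is} inherited from $\K_{3,3}$ in the sense you dismiss: the group $G=R\rtimes\langle\sigma\rangle$ of order $18k^2$ is a lift of the $1$-arc-regular subgroup of order $18$ of $\Aut(\K_{3,3})$, and the required cyclic semiregular subgroup of order $6k$ (e.g.\ $\langle uv^{-1}yx\sigma\rangle$ when $3\nmid k$) sits inside this lift. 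Your ``short computation'' only rules out finding such an element inside the lift of a regular $\ZZ_6$; that does not show the graph fails to be Cayley on some other group of order $6k^2$, nor that one needs automorphisms beyond those supplied by covering-space theory. The correct moral is the one you half-see: the cyclic semiregular subgroup should be sought in the lift of the full arc-regular group of order $18$, not in the lift of a regular subgroup of $\K_{3,3}$. To repair the argument you would need to write down the cover explicitly (the presentation $R=\langle u,v,x,y\rangle$ with $\langle u,v\rangle\cong\ZZ_k^2$ and $\langle x,y\rangle\cong\Sym(3)$, together with $\sigma$, does the job), verify connectedness, and then compute the order of a suitable element of $R\rtimes\langle\sigma\rangle$ lying outside $R$; note also that the paper needs two slightly different choices of that element according to whether $3$ divides $k$, a case split your outline does not anticipate.
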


Finally, in view of Theorem~\ref{theorem:squarefree}, Proposition~\ref{prop:construction} and computational evidence gathered from the census of cubic arc-transitive graphs of order at most $10 000$~\cite{Conder10000,ConderDob}, we would like to propose the following conjecture which would completely settle Question~\ref{mainquestion}.
\begin{conjecture}\label{mainConjecture}
If $k$ is an odd positive integer, then  a cubic arc-transitive $k$-circulant has order at most $6k^2$.
\end{conjecture}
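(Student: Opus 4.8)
Fix a cubic arc-transitive $k$-circulant $\Gamma$ with $k$ odd, write $A=\Aut(\Gamma)$, and let $C\le A$ be a cyclic semiregular subgroup with $k$ orbits. Put $m=|C|$, so that $|V\Gamma|=km$ and the assertion to prove is exactly $m\le 6k$. By Tutte's theorem the vertex-stabiliser $A_v$ has order dividing $48$, so $C$ has index $k\,|A_v|\le 48k$ in $A$. The starting structural observation is that, since $C$ is semiregular and transitive on each of its orbits, $\Gamma$ is a connected voltage graph over the quotient $B=\Gamma/C$ with voltage group $C\cong\ZZ_m$, where $B$ is a cubic graph-with-semiedges on the $k$ orbits. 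Summing degrees in $B$ gives $3k$, so the number of semiedges of $B$ is $\equiv k\equiv 1\pmod 2$; hence $B$ has a semiedge, and a semiedge forces the corresponding nontrivial voltage to be an involution of $C$. Thus $2\mid m$, i.e.\ $m_2\ge 2$ (here and below $x_p$ denotes the $p$-part of an integer $x$, and $x_{6'}$ its part coprime to $6$).

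The plan is to bound $m$ prime by prime, using the factorisation $m=m_2\,m_3\,m_{6'}$ and the fact that $k=k_3\,k_{6'}$ for odd $k$, so that the three local bounds $m_2\le 2$, $m_3\le 3k_3$ and $m_{6'}\le k_{6'}$ multiply to $m\le 6\,k_3\,k_{6'}=6k$. First, one shows $m_2=2$: the involution of $C$ is the semiedge voltage, and a $2$-local analysis of $A$ (with $|A_v|_2\mid 16$) shows that a strictly larger cyclic $2$-subgroup cannot be simultaneously semiregular and compatible with arc-transitivity; this is precisely the argument available in the coprime-to-$6$ case and it transfers verbatim, since it does not see the odd part of $k$. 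Second, for a prime $p\ge 5$, a $p$-element fixing a vertex would fix each of its fewer than $p$ neighbours and hence, by connectedness, all of $\Gamma$; so every $p$-subgroup of $A$ is semiregular. Letting $P$ be the Sylow $p$-subgroup of $C$, one then aims to combine this semiregularity with a transfer/fusion argument inside $N_A(P)$ to obtain $m_p\le k_p$ for each $p\ge 5$, hence $m_{6'}\le k_{6'}$. Third, for $p=3$ one exploits $|A_v|_3=3$ together with the semiregularity of $3'$-subgroups to bound $m_3\le 3k_3$.

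Interleaved with the local bounds, the core $K$ of $C$ in $A$ (the largest subgroup of $C$ normal in $A$, necessarily cyclic) is controlled via Praeger's normal-quotient machinery: whenever $K$ has at least three orbits, $\Gamma\to\Gamma/K$ is a regular cyclic cover onto a cubic arc-transitive $k$-circulant with the same value of $k$, reducing the structural questions to cyclic covers of smaller bases. The $p$-parts behave multiplicatively along such a cover, so the per-prime estimates above must be proved for the full $m_p$ rather than merely for the core-free quotient; the normal cyclic $p$-cover coming from $K_p$ is itself highly restricted (being $A$-invariant), and analysing it separately is how one keeps the bound on $m$, and not just on $m/|K|$.

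The two steps that close up cleanly are the prime $2$ and the primes $p\ge 5$ with $p\nmid k$, and these already recover the proven case of $k$ squarefree and coprime to $6$. The main obstacle, and the reason the statement remains a conjecture, lies in exactly the two features that hypothesis excludes. When $3\mid k$, the Sylow $3$-subgroup of $A$ can be large and non-cyclic, so its fusion is no longer pinned down by the valency alone and the clean bound $m_3\le 3k_3$ is not guaranteed by a local count. When $p^2\mid k$ for some $p\ge 5$, the (semiregular) Sylow $p$-subgroup of $A$ need not be cyclic, and the cyclic group $P\le C$ can embed in it in many inequivalent ways, so $m_p\le k_p$ does not follow from semiregularity by itself. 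Bridging either gap appears to require either a classification of the admissible $p$-local structures in automorphism groups of cubic arc-transitive graphs, or a global voltage-theoretic rigidity statement forcing the voltages of $B$ to generate a cyclic group of order dividing $6k$; supplying one of these is the crux of a full proof.
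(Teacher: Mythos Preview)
The statement is a \emph{conjecture}; the paper does not prove it, and you correctly concede at the end that your outline does not close. So there is no ``paper's proof'' to match. What can be compared is your sketch of the squarefree, coprime-to-$6$ special case against the paper's actual proof of Theorem~\ref{theorem:squarefree}, and here your outline diverges substantially and contains a real gap.

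Your plan is to bound $m=|C|$ one prime at a time via $m_2\le 2$, $m_3\le 3k_3$, $m_{p}\le k_{p}$ for $p\ge 5$, using local (Sylow-theoretic) arguments and the core of $C$. The paper does not argue this way. It splits on whether $G$ is soluble or not. In the soluble case it first forces $t\le 1$ (Proposition~\ref{prop:yoyo} plus a cover-of-$\K_{3,3}$ analysis), deduces that every Sylow subgroup of $G$ is metacyclic, invokes a structure theorem for Sylow-metacyclic groups to peel off normal Sylow $p$-subgroups one at a time, and applies the key Lemma~\ref{IstvanLemma2} (which uses Lemma~\ref{IstvanLemma}---your semiedge observation---together with a subtle argument about the subgroup generated by elements of order~$p$) to get $|C_p|\le p$. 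In the insoluble case it passes to $G/S$ for $S$ the soluble radical, shows $G/S$ is almost simple, and runs a CFSG-based classification (Proposition~\ref{allsimples}) to bound $|S|_{2'}$ and $|\overline{C}|$ directly. None of this is a ``$2$-local analysis'' or a ``transfer/fusion argument inside $N_A(P)$'', and your assertion that the bound $m_2=2$ ``is precisely the argument available in the coprime-to-$6$ case and it transfers verbatim'' is not accurate: no such standalone bound on $m_2$ is ever isolated in the paper. Likewise, semiregularity of $p$-subgroups for $p\ge 5$ is used, but the conclusion $|C_p|\le k_p$ is obtained only after arranging $P_p\trianglelefteqslant G$ in the soluble case, not from fusion in $N_A(P)$ alone. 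So even restricted to the proven special case, your sketch is missing the essential structural steps; the per-prime bounds you state are the desired \emph{conclusions}, not things that follow from the short local arguments you indicate.
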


\section{Proof of Proposition~\ref{theorem:even}}
Let $k=2m$ be an even positive integer and let $p$ be a prime with $p\equiv 1\pmod 3$ and $p \nmid m$. Let  
$$G=\langle u,v,w,x\mid u^m,v^m,w^p,x^2,[u,v],[u,w],[v,w],u^xu,v^xv,w^xw\rangle.$$
In other words, $G$ is the generalised dihedral group on the abelian group $A:=\langle u,v,w\rangle\cong \ZZ_m^2 \times \ZZ_p$. Since $p\equiv 1\pmod 3$, there exists a positive integer $\alpha$ satisfying $\alpha^2+\alpha+1 \equiv 0  \pmod p$. 

Let $y$ be the automorphism of $G$ satisfying $u^y=v$, $v^y=u^{-1}v^{-1}$, $w^y=w^\alpha$,  and $x^y=x$, and let $s=uwx$. Note that $y$ has order $3$ while $s$ is an involution.

Let $S=\{s,s^y,s^{y^2}\}$, let $R=\langle S \rangle$ and let $\Gamma$ be the Cayley graph $\Cay(R,S)$. It is clear that $\Gamma$ is connected and, since $s^y \ne s$, we have $|S|=3$  and thus $\Gamma$ is cubic. Since $y$ has order $3$, $S$ is the orbit of $s$ under $\langle y \rangle$ which implies that $y$ induces an automorphism of $\Gamma$ fixing the identity but acting transitively on $S$. In particular, $\Gamma$ is arc-transitive.

Let $a=ss^y$ and $b=ss^{y^2}$. Note that $a,b\in A$ thus $a^s=a^x=a^{-1}$ and $b^s=b^x=b^{-1}$. In particular, $R = \langle a, b,s \rangle = \langle a,b\rangle \rtimes \langle s \rangle$. Now, a simple calculation yields that $a=u v^{-1} w^{1-\alpha}$ and $b=u^2 v w^{1-\alpha^2}$. In particular,  $a^m = w^{(1-\alpha)m}$. Since $p$ divides neither $m$ nor $1-\alpha$, it follows that $w\in \langle a\rangle$. This implies that $\langle a,b\rangle= \langle uv^{-1},u^2v \rangle  \times \langle w \rangle= \langle uv^{-1},u^3 \rangle\times \langle w \rangle$. In particular, $|A:\langle a,b\rangle|=3$  if $3$ divides $m$ and $\langle a,b\rangle=A$ otherwise. Similarly,  $|G:R|=3$  if $3$ divides $m$ and $G=R$ otherwise.

Let $C$ be the group generated by $u^3 w$ if $3$ divides $m$ and by $uw$ otherwise. Note that $C\leq\langle a,b\rangle\leq R$ and thus $C$ is a semiregular subgroup of $G$. Since $p$ is coprime to $m$, $C$ has order $\frac{mp}{3}$ if $3$ divides $m$, and $mp$ otherwise. In both cases, $C$ has $2m=k$ orbits on the vertices of $\Gamma$ and thus $\Gamma$ is a $k$-circulant. To conclude the proof, it suffices to note that there are infinitely many primes $p$ with $p\equiv 1\pmod 3,$ and $p \nmid m$.

\section{Proof of Proposition~\ref{prop:construction}}
Let $$R=\langle u,v,x,y \mid u^k,v^k,x^3,y^2,[u,v],u^xv^{-1},v^xvu,u^yv,v^yu,x^yx \rangle.$$
Note that $R=\langle u,v\rangle\rtimes\langle x,y\rangle\cong\ZZ_k^2\rtimes\Sym(3)$. Let 
$$
\delta=
\begin{cases}
1, \textrm{ if } 3 \textrm{ divides } k,\\
0, \textrm{ otherwise,}\\
\end{cases}
$$
and let $\sigma$ be the automorphism of $R$ such that $u^\sigma=u^{-1}v^{-1}$, $v^\sigma=u$,  $x^\sigma=(uv)^\delta x$, $y^\sigma=x^2y$, and let $G=R\rtimes\langle\sigma\rangle$. Note that one needs to check that $\sigma$ is indeed an automorphism of $R$. This is obvious when $\delta=0$ as, in this case, $\sigma$ acts on $R$ as conjugation by $x^{-1}$. Furthermore, even when $\delta=1$, $\sigma$ still acts on $\langle u,v\rangle\rtimes\langle y\rangle$ as conjugation by $x^{-1}$, hence one needs only check that the relations involving $x$ are preserved by $\sigma$. This is a straightforward computation. For example:
$$
(u^\sigma)^{x^\sigma}=(u^{-1}v^{-1})^{(uv)^\delta x}=(u^{-1}v^{-1})^x=u=v^\sigma.
$$
It is also easy to check that $x^{\sigma^3}=x$ and thus $\sigma$ has order $3$. Let $s=uvy$, let $S=\{s,s^\sigma,s^{\sigma^2}\}$, and let $\Gamma=\Cay(R,S)$. Note that $s$ is an involution and $s^\sigma=v^{-1}x^2y \ne s$ hence $S$ is an inverse-closed set of cardinality $3$ and $\Gamma$ is a cubic graph of order $|R|=6k^2$.  Since $S$ is a $\langle \sigma\rangle$-orbit, $\sigma$ induces an automorphism of $\Gamma$ fixing the identity but acting transitively on $S$. In particular, $\Gamma$ is $G$-arc-transitive.

We now show that $\Gamma$ is connected or, equivalently, that $S$ generates $R$. Recall that $s^\sigma=v^{-1}x^2y$. An easy computation yields $s^{\sigma^2}=u^{\delta-1}xy$. A more elaborate computation then yields that $ss^\sigma ss^{\sigma^2}=u^{3-\delta}\in\langle S\rangle$. From the definition of $\delta$, it follows that $u\in \langle S\rangle$, but then $u^s=v^{-1}\in\langle S\rangle$ and we easily conclude that $S$ generates $R$. It remains to exhibit a semiregular cyclic subgroup of $G$ of order $6k$.

Suppose first that $3$ does not divide $k$ and  let $C=\langle uv^{-1}yx\sigma\rangle$.  An easy computation shows that $uv^{-1}$, $y$ and $x\sigma$ pairwise commute and have orders $k$, $2$ and $3$, respectively. Since $k$ is coprime to $6$, it follows that $|C|=6k$. It remains to show that $C$ is semiregular. Since $|G_v|=3$, it suffices to show that $\langle x\sigma\rangle$ is semiregular but, in fact, $\langle x\sigma\rangle$ is central in $G$ and thus must intersect every point-stabiliser trivially.

Suppose now that $3$ divides $k$ and let $C=\langle y\sigma\rangle$. As $y\sigma$ is not contained in $R$ which is a normal subgroup of index $3$ in $G$, we get that $|y\sigma|=3|(y\sigma)^3|$. A computation yields that $(y\sigma)^3=v^{-1}xy$. Similarly,  $(y\sigma)^3$ is not contained in $\langle u,v\rangle\rtimes\langle x\rangle$ which  is a normal subgroup of index $2$ in $R$, and thus $|(y\sigma)^3|=2|(y\sigma)^6|$. Now, $(y\sigma)^6=(v^{-1}xy)^2=v^{-2}u^{-1}$ and $|v^{-2}u^{-1}|=k$ and thus $|C|=|y\sigma|=6k$. It remains to show that $C$ is semiregular but, since $|G_v|=3$, it suffices to show that $\langle(y\sigma)^{2k}\rangle$ is semiregular. Since $3$ divides $k$, and $R$ is normal of index $3$ in $G$, we find that $(y\sigma)^{2k}$ is contained in the regular group $R$. This completes the proof.

\section{Preliminaries to the proof of Theorem~\ref{theorem:squarefree}}
We start with some notation and definitions. Let $G$ be a group of automorphisms of a graph $\Gamma$. We denote by $G_v$ the stabiliser in $G$ of the vertex $v$, by $\Gamma(v)$ the neighbourhood of $v$, and by  $G_v^{\Gamma(v)}$ the permutation group induced by the action of $G_v$ on $\Gamma(v)$. We say that $\Gamma$ is \emph{$G$-vertex-transitive} (\emph{$G$-arc-transitive}, respectively) if $G$ is transitive on the set of vertices (arcs, respectively) of $\Gamma$, and that it is  \emph{$G$-locally-transitive} if $G_v^{\Gamma(v)}$ is transitive for every vertex $v$.

A \emph{$t$-arc} of $\Gamma$ is a sequence of $t+1$ vertices such that any two consecutive vertices in the sequence are adjacent, and with any repeated vertices being more than $2$ steps apart.  We say that $\Gamma$ is \emph{$(G,t)$-arc-transitive} if $G$ is transitive on the set of $t$-arcs of $\Gamma$.

Given an integer $n$ and a prime $p$, we will sometimes denote by $n_p$ the $p$-part of $n$ (that is, the largest power of $p$ dividing $n$) and by $n_{p'}$ the $p'$ part (that is, $n/n_p$).

Given a graph $\Gamma$ and $N$ a group of automorphisms of $\Gamma$, the {\em quotient graph} $\Gamma/N$ is the graph whose vertices are the $N$-orbits, and with  two such $N$-orbits $v^N$ and $u^N$ adjacent whenever there is a pair of vertices $v'\in v^N$ and $u'\in u^N$ that are adjacent in $\Gamma$. If the natural projection $\pi:\Gamma\to \Gamma/N$ is a local bijection (that is, if $\pi_{|\Gamma(v)}:\Gamma(v)\to (\Gamma/N)(v^N)$  is a bijection for every vertex $v$ of $\Gamma$) then $\Gamma$ is called a \emph{regular $N$-cover} of $\Gamma/N$. Such covers have many important properties that will be used repeatedly, most of which are folklore. (See~\cite[Lemma 3.2]{GenLost} for example.)

We now collect a few  results that will be useful in the proof.
\begin{lemma}\label{pgroups}
Let $p$ be an odd prime, let $P$ be a $p$-group with a maximal cyclic subgroup and let $X$ be the group generated by elements of order $p$ in $P$.
\begin{enumerate}
\item If $P=X$, then $P$ is elementary abelian. \label{pgroup1}
\item If $X$ is cyclic, then so is $P$. \label{pgroup2}
\item If $P$ is cyclic of order at least $p^2$, then an automorphism of $P$ of order $2$ centralising the maximal subgroup of $P$ must centralise $P$. \label{pgroup3}
\end{enumerate}
\end{lemma}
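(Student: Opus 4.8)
The plan is to treat the three parts separately, using the classification of $p$-groups with a maximal cyclic subgroup. Recall that for $p$ odd, a $p$-group $P$ having a cyclic subgroup of index $p$ is either cyclic or of the form $\ZZ_{p^{n-1}}\rtimes\ZZ_p$ where the $\ZZ_p$ acts on $\ZZ_{p^{n-1}}$ in the essentially unique faithful way (this is the standard classification; see e.g.\ Huppert or Gorenstein). So I would first dispose of the cyclic case, where all three statements are either immediate or vacuous, and then work in the non-cyclic case $P=\langle c\rangle\rtimes\langle d\rangle$ with $|c|=p^{n-1}$, $|d|=p$.

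For part \eqref{pgroup1}, I would compute the set of elements of order dividing $p$ in the non-cyclic group $P$. Every element can be written as $c^i d^j$; using the action relation $d^{-1}cd=c^{1+p^{n-2}}$ and expanding $(c^i d^j)^p$, one checks that $(c^id^j)^p = c^{ip\cdot(\text{unit})}$ when $j\neq 0$ (the cross terms collect into a power of $c$ that is divisible by $p$ but, crucially, the leading term $c^{ip}$ survives), so $(c^i d^j)^p=1$ forces $p^{n-2}\mid i$. Hence the elements of order dividing $p$ all lie in $\langle c^{p^{n-2}}, d\rangle$, which is a proper subgroup once $n\geq 3$. If $n=2$ then $P$ has order $p^2$ and, being non-cyclic, is elementary abelian. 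So $P=X$ forces $P$ elementary abelian, which also covers the cyclic case (there $P=X$ forces $|P|=p$). The main point to get right here is the expansion of $(c^id^j)^p$, and this is exactly where $p$ being odd is used: the "collapsing" of the commutator corrections into a $p$-th power identity $(c^id^j)^p\equiv c^{pi}$ uses $\binom{p}{2}\equiv 0\pmod p$, which fails for $p=2$.

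Part \eqref{pgroup2} is then essentially the contrapositive of the computation above: if $P$ is non-cyclic, I have just exhibited two non-commuting elements of order $p$ (for instance $c^{p^{n-2}}$ and $d$ when $n\geq 3$, or any two generators when $n=2$), so $X$ is non-abelian, hence non-cyclic. So $X$ cyclic forces $P$ cyclic. For part \eqref{pgroup3}, $P=\langle c\rangle$ is cyclic of order $p^n$ with $n\geq 2$, its unique maximal subgroup is $\langle c^p\rangle$, and $\Aut(P)\cong\ZZ_{p^{n-1}}\times\ZZ_{p-1}$ is abelian, hence acts on $P/\langle c^p\rangle\cong\ZZ_p$ via a homomorphism $\Aut(P)\to\Aut(\ZZ_p)\cong\ZZ_{p-1}$; an order-$2$ automorphism $\phi$ centralising $\langle c^p\rangle$ fixes $c^p$, so $\phi(c)=c^{1+tp^{n-2}}$ for some $t$ (the automorphisms trivial on $\langle c^p\rangle$ form a cyclic group of order $p$, generated by $c\mapsto c^{1+p^{n-2}}$), and such a map has $p$-power order; since it also has order dividing $2$ and $p$ is odd, it is trivial. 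The only mildly delicate point across the whole lemma is keeping the $n=2$ (order $p^2$) boundary case straight in parts \eqref{pgroup1} and \eqref{pgroup2}, which is why I would isolate it explicitly rather than fold it into the general computation.
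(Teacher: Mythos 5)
The paper's own proof of this lemma is a one-line appeal to the classification of $p$-groups with a cyclic maximal subgroup, so your overall strategy is the intended one, and your treatment of part \eqref{pgroup3} and the computation of $(c^id^j)^p$ in part \eqref{pgroup1} are essentially sound (modulo a harmless slip in part \eqref{pgroup3}: the automorphisms of $\ZZ_{p^n}$ trivial on $\langle c^p\rangle$ are $c\mapsto c^{1+tp^{n-1}}$, not $c^{1+tp^{n-2}}$; the group still has order $p$ and your parity argument goes through). There are, however, two genuine errors. First, your statement of the classification is incomplete: for odd $p$, a $p$-group with a cyclic subgroup of index $p$ is cyclic, or $\ZZ_{p^{n-1}}\times\ZZ_p$, or the modular group $M_{p^n}=\ZZ_{p^{n-1}}\rtimes\ZZ_p$ with $n\geq 3$. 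Your phrase ``the essentially unique faithful way'' explicitly excludes the abelian non-cyclic case $\ZZ_{p^{n-1}}\times\ZZ_p$, which must also be checked (it causes no trouble: there $X=\ZZ_p\times\ZZ_p$ is proper for $n\geq 3$ and non-cyclic, so both \eqref{pgroup1} and \eqref{pgroup2} survive).

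Second, and more seriously, the justification of part \eqref{pgroup2} rests on a false claim. In $M_{p^n}$ the elements $c^{p^{n-2}}$ and $d$ \emph{do} commute: $d^{-1}c^{p^{n-2}}d=c^{p^{n-2}(1+p^{n-2})}=c^{p^{n-2}}$, since $c^{p^{2(n-2)}}=1$ once $2(n-2)\geq n-1$, i.e.\ for all $n\geq 3$. Hence $X=\langle c^{p^{n-2}},d\rangle\cong\ZZ_p\times\ZZ_p$ is abelian, not non-abelian; similarly, any two generators of a group of order $p^2$ with a cyclic maximal subgroup commute, since such a group is abelian. So there are no non-commuting elements of order $p$ to exhibit, and ``$X$ is non-abelian, hence non-cyclic'' is not a valid route. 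The conclusion of \eqref{pgroup2} is nevertheless correct and the fix is short: in every non-cyclic case (both $\ZZ_{p^{n-1}}\times\ZZ_p$ and $M_{p^n}$, as well as the elementary abelian group of order $p^2$), $X$ contains a subgroup isomorphic to $\ZZ_p\times\ZZ_p$ and is therefore non-cyclic; the contrapositive gives \eqref{pgroup2}. You should replace the non-commutativity argument by this non-cyclicity argument.
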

\begin{proof}
These results easily follow from the classification of $p$-groups with a cyclic maximal subgroup (see for example~\cite[Section 5.3]{Stell}).
\end{proof}

\begin{lemma}\label{perfectActing}
Let $G$ be a group with a normal subgroup $N$ and let $T$ be a perfect group acting on $G$ and centralising $N$. If $T$ acts trivially on $G/N$, then it acts trivially on $G$.
\end{lemma}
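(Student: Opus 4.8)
The plan is to prove Lemma~\ref{perfectActing} directly by exploiting the fact that a perfect group has no nontrivial abelian quotient, combined with the hypothesis that $T$ centralises $N$. First I would form the semidirect product $H = G \rtimes T$ (using the given action), so that inside $H$ the subgroup $N$ is normal, $T$ centralises $N$, and $[G,T] \leq N$ since $T$ acts trivially on $G/N$. The goal is then to show $[G,T]=1$.

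The key step is to analyse the commutator map $g \mapsto [g,t]$ for a fixed $t \in T$ and to use the ``three subgroups lemma'' or a direct commutator identity. Concretely, for $g \in G$ and $t_1,t_2 \in T$, I would expand $[g, t_1 t_2]$ using the identity $[g, t_1 t_2] = [g,t_2]\,[g,t_1]^{t_2}$. Since $[g,t_1], [g,t_2] \in N$ and $T$ centralises $N$, the conjugate $[g,t_1]^{t_2}$ equals $[g,t_1]$, and moreover the two factors lie in the abelian-modulo-nothing situation: in fact $N$ need not be abelian, but the elements $[g,t_1]$ and $[g,t_2]$ both lie in $N$ and are centralised by all of $T$. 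The upshot is that for fixed $g$, the map $\varphi_g \colon T \to N$ given by $\varphi_g(t) = [g,t]$ satisfies $\varphi_g(t_1 t_2) = \varphi_g(t_2)\varphi_g(t_1)$; composing with the fact that its image lies in $C_N(T)$, one checks $\varphi_g$ is an anti-homomorphism into a group on which $T$ acts trivially. A cleaner route: consider the subgroup $M = [G,T] \leq N$. Then $T$ centralises $M$ (as $M \leq N$), and $T$ acts trivially on $M$; I claim $[G,T] = [[G,T],T]$ would force $M=1$ only with a nilpotence hypothesis, so instead I would argue as follows.

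Here is the argument I would actually carry out. Fix $t \in T$ and define $f \colon G \to N$ by $f(g) = [g,t] = g^{-1}g^t$. For $g, h \in G$ we have $f(gh) = (gh)^{-1}(gh)^t = h^{-1}g^{-1}g^t h^t = h^{-1} f(g) h^t = h^{-1}f(g)h \cdot h^{-1}h^t = (f(g))^h f(h)$. Now since $T$ acts trivially on $G/N$, both $f(g)$ and $f(h)$ lie in $N$; I want to see that $f$ descends to a map that is essentially a homomorphism. Restricting attention to the action of $T$ on the quotient, the crucial observation is that the element $t \in T$, regarded as acting on $G$, induces the trivial map on $G/N$ and on $N$, so the map $g \mapsto g^{-1}g^t$ followed by the projection $N \to N/[N,N]$ is a homomorphism $G \to N/[N,N]$ killing $N$ (since $t$ centralises $N$), hence factors through $G/N$. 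Iterating / symmetrising over generators of $T$, the composite map $G \to N^{\mathrm{ab}}$ is $T$-equivariant for the trivial $T$-action on the target, and its restriction to $[G,T]$ must both be trivial (as it kills $N \supseteq [G,T]$) — this shows $[G,T] \leq [N,N]$. The point is that one can now \emph{repeat} the argument with $N$ replaced by $N_1 := [N,N] \cap [G,T]$, which is again normal, centralised by $T$, with $T$ trivial on $G/N_1$; this gives $[G,T] \leq [N_1,N_1] \leq [N,N]$ again, so plain iteration stalls.

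Because of that stalling, the honest proof must instead use perfectness more forcefully, and the main obstacle is precisely to convert ``$T$ perfect'' into a statement that kills $[G,T]$ without a finiteness or nilpotence assumption on $G$. The clean way is: the map $\theta \colon T \to \mathrm{Hom}(G/N, N)$ (or rather into the set of crossed-type maps) sending $t$ to $(gN \mapsto [g,t])$ is, by the commutator identity $[g, t_1t_2] = [g,t_2]\,[g,t_1]^{t_2} = [g,t_2][g,t_1]$ (the last equality because $T$ centralises $N \ni [g,t_1]$), a \emph{homomorphism} from $T$ into the abelian-group-valued maps — wait, it is an anti-homomorphism into $\mathrm{Hom}(G/N, C_N(T))$, which is an abelian group. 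Hence $\theta$ factors through $T^{\mathrm{ab}} = T/[T,T] = 1$ since $T$ is perfect. Therefore $\theta$ is trivial, i.e. $[g,t] = 1$ for all $g \in G$, $t \in T$, which is exactly the claim. I would carry out the steps in this order: (1) record the commutator identity and use $C_T$-centrality to get $[g, t_1 t_2] = [g,t_2][g,t_1]$; (2) deduce that for each fixed $g$, $t \mapsto [g,t]$ is an anti-homomorphism $T \to C_N(T)$ into an abelian group, hence trivial on $[T,T] = T$; (3) conclude $[G,T] = 1$. The main subtlety to get right is step~(1): verifying that $[g,t_1]^{t_2} = [g,t_1]$, which uses precisely that $[g,t_1] \in N$ and $T$ centralises $N$; everything else is then forced. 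I expect no serious obstacle beyond bookkeeping with these identities, since the perfectness of $T$ does all the heavy lifting once the target is identified as abelian.
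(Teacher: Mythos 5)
Your step (1) is correct: from $[g,t_1t_2]=[g,t_2]\,[g,t_1]^{t_2}$ and the fact that $[g,t_1]\in N$ is centralised by $t_2$, the map $\varphi_g\colon t\mapsto [g,t]$ is indeed an anti-homomorphism from $T$ into $N$. The gap is in step (2), where you assert the target is abelian. Since $T$ centralises $N$, the set you call $C_N(T)$ is all of $N$, and $N$ is not assumed abelian; likewise the maps $g\mapsto[g,t]$ are crossed homomorphisms rather than homomorphisms, so ``$\mathrm{Hom}(G/N,C_N(T))$'' is not an abelian group you can quote. An anti-homomorphism factors through $T^{\mathrm{ab}}$ only if its \emph{image} is abelian, and here that is exactly the sticking point: writing $t_1t_2=t_2t_1[t_1,t_2]$ and applying the same commutator identity twice gives
$$[g,[t_1,t_2]]=[g,t_2]\,[g,t_1]\,[g,t_2]^{-1}\,[g,t_1]^{-1},$$
so the image of $\varphi_g$ is abelian if and only if $\varphi_g$ vanishes on $[T,T]=T$, i.e.\ if and only if the conclusion of the lemma already holds. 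Your argument is therefore circular at the one point where perfectness is supposed to do the work; the ``stalling'' you noticed earlier in the write-up is the same obstruction resurfacing.

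The way to break this circularity is the three subgroups lemma (equivalently, the Hall--Witt identity), which is what the paper does: working in $G\rtimes T$, one has $[G,T]\leq N$, hence $[G,T,T]\leq[N,T]=1$ and likewise $[T,G,T]=1$; the three subgroups lemma then yields $[T,T,G]=1$, and perfectness of $T$ gives $[T,G]=[[T,T],G]=1$. Your commutator bookkeeping is essentially the raw material for that lemma, but without invoking it (or reproving the Hall--Witt identity) the final step does not go through.
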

\begin{proof}
Since $T$ acts trivially on $G/N$, we have $[G,T]\leq N$ and thus $[G,T,T]\leq [N,T]=1$. Similarly, $[T,G,T]=1$. By the three subgroups lemma, it follows that $[T,T,G]=1$ and, since $T$ is perfect, $[T,G]=1$.
\end{proof}

\begin{lemma}\label{IstvanLemma}
Let $\Gamma$ be a graph with every vertex having odd valency and let $C$ be a semiregular cyclic group of automorphisms of $\Gamma$. If $C$ has an odd number of orbits, then $C$ has even order and the unique involution of $C$ reverses some edge of $\Gamma$.
\end{lemma}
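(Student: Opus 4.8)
The plan is to count edges modulo $2$ using the action of $C$. First I would let $n$ be the order of $C$, let $m$ be the number of orbits of $C$ on the vertex set, so the graph has $mn$ vertices, and observe that since every vertex has odd valency, the total number of edges is $\frac{1}{2}\cdot(\text{valency})\cdot mn$, which forces $mn$ to be even. Since $m$ is assumed odd, $n$ must be even, so $C$ has even order and hence a unique involution $\iota$.

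Next I would analyse how $\iota$ acts on the edge set. Consider the set of edges; $C$ acts on it, and since the valency is odd, I want to argue that $\iota$ cannot act without reversing an edge. Suppose for contradiction that $\iota$ reverses no edge, i.e. for every edge $\{v,w\}$ we have $\{v^\iota, w^\iota\}\ne\{v,w\}$ unless $v^\iota=v$ and $w^\iota=w$; but $\iota$ is semiregular with no fixed vertices (as $C$ is semiregular and $\iota\ne 1$), so in fact the assumption is that $\iota$ fixes no edge setwise at all. Then $\langle\iota\rangle$ acts on the edge set with all orbits of size $2$, so the number of edges is even. On the other hand, I would compute the number of edges modulo $2$ from the vertex side: the valency is odd and the number of vertices is $mn$; I need $mn$ to have a controlled parity. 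Here is the key point: the orbits of $C$ on vertices all have size $n$ (semiregularity), $n$ is even, so $mn$ is even and $\frac{1}{2}(\text{valency})mn = (\text{valency})\cdot m \cdot \frac{n}{2}$ — this is odd precisely when $m$ and $\frac{n}{2}$ are both odd. So I would sharpen: pick the involution $\iota$, and note $C/\langle\iota\rangle$ has order $n/2$ and acts semiregularly on the $C$-orbit space refinement — actually cleaner is to work directly with the quotient by $\iota$.

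A cleaner route: let $\Sigma=\Gamma/\langle\iota\rangle$. Since $\iota$ is semiregular, every vertex orbit of $\langle\iota\rangle$ has size $2$, so $\Sigma$ has $\frac{mn}{2}$ vertices. If $\iota$ reverses no edge of $\Gamma$, then the projection $\Gamma\to\Sigma$ is such that no edge of $\Gamma$ maps to a loop, and I would check it behaves like a double cover locally, so $\Sigma$ is again a graph with every vertex of odd valency (the same valency as $\Gamma$), and $C/\langle\iota\rangle$ is a semiregular cyclic group on $\Sigma$ with $m$ orbits, $m$ odd. Iterating, the order of the semiregular group drops by a factor of $2$ each time while the number of orbits stays odd and the valency stays odd; this descent must terminate, and the terminal case is a cyclic semiregular group of odd order with an odd number $m$ of orbits acting on an odd-valency graph $\Sigma_0$ with $m\cdot(\text{odd})$ vertices — an odd number of vertices together with odd valency makes the edge count $\frac{1}{2}(\text{odd})(\text{odd})$ non-integral, a contradiction. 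Hence at some stage $\iota$ must reverse an edge, and pulling this back (the reversed edge in the quotient lifts to a reversed edge upstairs, or the contradiction already occurred at the first step) shows the original involution reverses an edge of $\Gamma$.

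The main obstacle I anticipate is making the cover/quotient bookkeeping rigorous: I must confirm that if $\iota$ reverses no edge then $\Gamma\to\Gamma/\langle\iota\rangle$ is genuinely a regular double cover (local bijection on neighbourhoods), so that valency and the semiregular quotient group behave as claimed, and I must handle the interaction between "$\iota$ reverses an edge in $\Sigma$" and "some involution of $C$ reverses an edge in $\Gamma$" — in the descent, the involution changes at each level, so I should instead avoid the full induction and argue directly: assume the unique involution $\iota$ of $C$ reverses no edge, deduce the edge count of $\Gamma$ is even (orbits of $\langle\iota\rangle$ on edges all have size $2$), yet simultaneously show it is odd. To get the parity of $|E(\Gamma)|$ the other way, I use that $C$ itself acts on $E(\Gamma)$; an orbit has size dividing $n$, and an orbit has odd size only if its stabiliser contains $\iota$, i.e. only if $\iota$ fixes that edge setwise — but under our assumption $\iota$ fixes no edge, so every $C$-orbit on edges has even size, giving $|E(\Gamma)|$ even with no contradiction yet. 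So the real leverage must come from counting via vertices: $|E(\Gamma)| = \frac{d\,mn}{2}$ with $d$ odd; this is odd iff $m$ is odd and $n\equiv 2\pmod 4$. Thus the genuinely delicate case is $4\mid n$, and there I will need the quotient argument: replace $C$ by $C/\langle\iota\rangle$ acting on $\Gamma/\langle\iota\rangle$ (valency still $d$, orbits still $m$, group order $n/2$) and repeat, so that after finitely many steps the $2$-part of the group order is exactly $2$ and the counting contradiction bites. Getting the base-case edge reversal to propagate back up the tower of covers is the step to write carefully.
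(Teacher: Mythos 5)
Your first step (that $|C|$ must have even order) is fine, and your parity count of $|E(\Gamma)|=\tfrac{1}{2}d\,mn$ correctly disposes of the case $n\equiv 2\pmod 4$: there $|E(\Gamma)|$ is odd, while an involution $\iota$ that is fixed-point-free on vertices and reverses no edge would force all $\langle\iota\rangle$-orbits on edges to have size $2$. But the case $4\mid n$, which you defer to a quotient descent, contains a genuine gap. Your descent needs $\Gamma/\langle\iota\rangle$ to be a graph of the \emph{same odd valency}, i.e.\ that the projection is a local bijection, and this does \emph{not} follow from ``$\iota$ is a fixed-point-free involution reversing no edge'': it fails exactly when some vertex $v$ has two neighbours $u$ and $u^\iota$ interchanged by $\iota$. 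A concrete instance is the cube $Q_3=C_4\,\square\, K_2$ with $\iota$ the simultaneous rotation of both $4$-cycles by two steps: $\iota$ is fixed-point-free, reverses no edge, yet every vertex has two of its three neighbours swapped by $\iota$, so the quotient has valency $2$, not $3$. To exclude this you would have to invoke the full hypotheses (cyclic semiregular $C$ containing $\iota$ with an odd number of orbits), and you give no argument for how they would do so; you flag this as ``the step to write carefully'' but it is really the heart of the matter, not bookkeeping. (Your worry about propagating the reversed edge back up the tower is, by contrast, repairable: if the covering property did hold, an edge of the quotient reversed by the involution of $C/\langle\iota\rangle$ is incompatible, via the automorphism property of a generator of order $4$, with $\iota$ reversing no edge upstairs.)

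For comparison, the paper sidesteps covers entirely with a short counting argument one level up: it forms the symmetric $k\times k$ matrix whose $(i,j)$ entry is the number of neighbours in the orbit $C_j$ of a vertex of $C_i$. Odd row sums and odd $k$ make the total sum odd, while symmetry makes the off-diagonal contribution even, so some diagonal entry is odd; the subgraph induced on the corresponding orbit is then a Cayley graph $\Cay(C,S)$ with $|S|$ odd and $S$ inverse-closed, which forces the unique involution of $C$ into $S$, i.e.\ onto a reversed edge. If you want to salvage your approach, that diagonal-entry argument is essentially the ingredient you are missing: it is what locates the parity obstruction inside a single $C$-orbit, where the cyclic structure can be brought to bear, rather than in the global edge count, where the case $4\mid n$ destroys the parity information.
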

\begin{proof}
Let $(C_1,\ldots,C_k)$ be an ordering of the orbits of $C$ and let $A=\{a_{ij}\}$ be the $k\times k$ matrix such that $a_{ij}$ is the number of vertices of $C_j$ adjacent to a given vertex of $C_i$. It is not hard to see that this is independent of the choice of vertex, hence $A$ is well-defined and, moreover, $a_{ij}=a_{ji}$ hence $A$ is symmetric.

By hypothesis, $k$ is odd and the sum of every row and column is odd. In particular, the sum of all the entries of $A$ is odd. On the other hand, $A$ is symmetric and thus the sum of the non-diagonal entries is even. This shows that at least one diagonal entry of $A$, say $a_{nn}$, must be odd.

Let $X$ be the graph induced on $C_n$. Since $C$ is semiregular, it acts regularly on $X$ and we can view $X$ as a Cayley graph $\Cay(C,S)$. Since $X$ has odd valency, $|S|$ is odd, $|C|$ is even and $S$ contains the unique involution of $C$. The result follows.
\end{proof}

\begin{lemma}\label{lem:edge-rev}
Let $\Gamma$ be a $G$-arc-transitive group and let $N$ be a normal subgroup of $G$. If $N$ contains an element reversing some edge of $\Gamma$, then $\Gamma$ is $N$-vertex-transitive.
\end{lemma}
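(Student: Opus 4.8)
The plan is to exploit connectedness of $\Gamma$ together with the observation that, since $N\trianglelefteq G$, the $N$-orbits on vertices form a $G$-invariant partition of the vertex set.

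First I would note that if $g\in N$ reverses the edge $\{u,v\}$, then $u^g=v$, so $u$ and $v$ lie in a common $N$-orbit, say $B$. Thus $\Gamma$ has at least one edge both of whose endpoints lie in a single $N$-orbit.

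The key step is to promote this to the statement that \emph{every} edge of $\Gamma$ has both endpoints in a common $N$-orbit. Given an arbitrary edge $\{a,b\}$, $G$-arc-transitivity supplies an element $h\in G$ with $u^h=a$ and $v^h=b$. Since $N$ is normal in $G$, the set $B^h$ is again an $N$-orbit, and it contains both $a=u^h$ and $b=v^h$; hence $a$ and $b$ lie in a common $N$-orbit.

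Finally I would invoke connectedness: if every edge of $\Gamma$ joins two vertices of the same $N$-orbit, then each connected component of $\Gamma$ is contained in a single $N$-orbit, and since $\Gamma$ is connected the whole vertex set is one $N$-orbit, i.e.\ $N$ is vertex-transitive. I do not expect a real obstacle here; the only points needing (minimal) care are checking that $B^h$ is an $N$-orbit, which is immediate from normality, and making explicit that the standing connectedness assumption is what forces the conclusion.
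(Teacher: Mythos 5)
Your proposal is correct and is essentially the paper's own argument: the paper conjugates the edge-reversing element of $N$ to show every edge has its endpoints in a common $N$-orbit, whereas you transport the $N$-orbit itself using normality, which amounts to the same thing. Both then conclude by connectedness.
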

\begin{proof}
Let $e$ be an edge of $\Gamma$. Since $N$ is normal in the arc-transitive group $G$, $N$ must contain an element reversing $e$. In particular, the endpoints of $e$ are in the same $N$-orbit. By connectedness, $N$ is vertex-transitive.
\end{proof}

\begin{lemma}\label{IstvanIstvan}
Let $G$ be a transitive permutation group, let $N$ be a normal subgroup of $G$ and let $C$ be a semiregular subgroup of $G$ with $k$ orbits. If $|N|$ is coprime to $|G_v|$, then the induced action of $C$ on the $N$-orbits is semiregular with $k'$ orbits, where $k'$ divides $k$.
\end{lemma}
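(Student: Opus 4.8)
The plan is to reduce the whole statement to understanding, for a single $N$-orbit $B$, the setwise stabiliser $D$ of $B$ inside $C$: the crux is to prove $D=C\cap N$, after which both the semiregularity of the induced action and the divisibility $k'\mid k$ follow by a short orbit count. Write $\Omega$ for the set on which $G$ acts. As a preliminary observation, the coprimality hypothesis already forces $N$ to be semiregular, since for any point $v$ the order of $N\cap G_v$ divides $\gcd(|N|,|G_v|)=1$; in particular every $N$-orbit has size exactly $|N|$. Since $N\trianglelefteq G$, the subgroup $C$ normalises $N$ and so permutes the set $\bar\Omega$ of $N$-orbits, and the stabiliser in $C$ of a point $B=v^N$ of $\bar\Omega$ is precisely $D:=\{c\in C: B^c=B\}$. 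One inclusion is immediate: $C\cap N\leq D$, because $N$ fixes each of its own orbits setwise.

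For the reverse inclusion I would play two divisibility facts against each other. First, $D\leq C$ and $C$ is semiregular, so $D$ acts semiregularly on $B$ (each point stabiliser $D\cap G_w$, $w\in B$, is trivial), and hence $|D|$ divides $|B|=|N|$. Second, $ND$ is a subgroup of $G$ acting on $B$ with $N$ already transitive (indeed regular), so $ND=N\rtimes(ND)_v$ with $(ND)_v\cong D/(D\cap N)$; since $C\cap N\leq D$ we have $D\cap N=C\cap N$, so $|(ND)_v|=|D|/|C\cap N|$ divides $|D|$, and therefore divides $|N|$. On the other hand $(ND)_v\leq G_v$, so $|(ND)_v|$ divides $|G_v|$. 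As $\gcd(|N|,|G_v|)=1$, this forces $(ND)_v=1$, i.e.\ $|D|=|C\cap N|$, and combined with $C\cap N\leq D$ we get $D=C\cap N$. I expect this to be the one genuinely delicate step; the key realisation is that semiregularity of $C$ on its own already confines $|D|$ to a divisor of $|N|$, which is exactly what allows the coprimality hypothesis to bite on the stabiliser $(ND)_v$.

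To finish, $B$ was an arbitrary $N$-orbit, so every point stabiliser of $C$ acting on $\bar\Omega$ equals $C\cap N$; in particular $C\cap N$ is the kernel of that action, and the induced group $C/(C\cap N)$ acts semiregularly on $\bar\Omega$. If $k'$ is its number of orbits, then $k'\,|C/(C\cap N)|=|\bar\Omega|=|\Omega|/|N|$, while $|\Omega|=k|C|$ because $C$ is semiregular with $k$ orbits on $\Omega$; combining these gives $k'=k/|N:C\cap N|$, which divides $k$.
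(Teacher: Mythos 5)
Your proof is correct and follows essentially the same route as the paper's: both arguments pin down the stabiliser in $C$ of an $N$-orbit by playing the divisibility forced by semiregularity of $C$ on that orbit (giving a divisor of $|N|=|v^N|$) against the fact that the induced stabiliser has order dividing $|G_v|$, and then conclude with the same orbit count $k/k'=|N:C\cap N|$. The only difference is packaging: the paper argues element-by-element with $Nc$ in $G/N$, whereas you work with the full setwise stabiliser $D$ and the subgroup $ND$.
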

\begin{proof}
Since $|N|$ is coprime to $|G_v|$, $N_v=1$ and thus $|N|=|v^N|$ for every point $v$. It follows that $|(G/N)_{v^N}|=\frac{|G/N|}{|\Omega|/|v^N|}=\frac{|G|}{|\Omega|}=|G_v|$.

Let $c\in C$ such that $Nc$ (viewed as an element of $G/N$) fixes some $v^N$.  For the first part, it suffices to show that $cN$ is trivial. Note that, by the previous paragraph, the order of $Nc$ divides $|G_v|$. On the other hand,  since $v^N$ is fixed by $Nc$, $v^N$ can be partitioned in $\langle c\rangle$-orbits, but these all have the same size, namely $|c|$, and thus $|c|$ divides $|N|$. It follows that the order of $Nc$ divides both $|G_v|$ and $|N|$ but these are coprime and thus $Nc$ is trivial.

As for the second claim, $k=\frac{|\Omega|}{|C|}$ while $k'=\frac{|\Omega|}{|v^N|}\frac{|C\cap N|}{|C|}$ thus $\frac{k}{k'}=\frac{|v^N|}{|C\cap N|}$. Recall that $|N|=|v^N|$ and thus $\frac{k}{k'}=\frac{|N|}{|C\cap N|}$ which is an integer.
\end{proof}

\begin{lemma}\label{lemma:faithful}
Let $\Gamma$ be a $G$-arc-transitive graph. If $G$ has a normal semiregular subgroup with at most two orbits on vertices, then the subgroup of $G$ fixing a vertex and all its neighbours is trivial.
\end{lemma}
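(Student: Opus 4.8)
The plan is to show that $K$, the subgroup of $G$ fixing a vertex $v$ and each of its neighbours, acts trivially on $V\Gamma$; since $G$ acts faithfully on the vertex set, this gives $K=1$. The main tool is the following elementary remark. If $N$ is a normal semiregular subgroup of $G$, if $\Delta$ is an $N$-orbit, and if $g\in G$ fixes a point $u\in\Delta$, then, identifying $\Delta$ with $N$ so that $N$ acts by right multiplication and $u$ corresponds to $1$, the element $g$ acts on $\Delta$ as the automorphism $n\mapsto g^{-1}ng$ of $N$; this is a one-line computation using $N\trianglelefteq G$ and semiregularity. Two consequences will be used: first, if such a $g$ moreover fixes pointwise a subset of $\Delta$ whose corresponding group elements generate $N$, then the associated automorphism is trivial, so $g$ centralises $N$; second, combining this with the standard fact that the centraliser in a symmetric group of a transitive subgroup is semiregular, any element of $G$ that centralises $N$ and fixes a point of an $N$-orbit $\Delta$ acts trivially on $\Delta$.

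I would first treat the case where $N$ is transitive, hence regular. Identifying $V\Gamma$ with $N$ so that $v$ corresponds to $1$, we have $\Gamma=\Cay(N,S)$ with $S=\Gamma(v)$, and $\langle S\rangle=N$ since $\Gamma$ is connected. Because $K$ fixes $v$ and fixes $S$ pointwise, the remark gives first that $K$ centralises $N$ and then that $K$ acts trivially on $V\Gamma$; hence $K=1$.

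Now suppose $N$ has exactly two orbits $\Delta_1$ and $\Delta_2$. Since $N\trianglelefteq G$, the group $G$ permutes $\{\Delta_1,\Delta_2\}$, so $G_v$ fixes the orbit containing $v$ and therefore fixes each of $\Delta_1$, $\Delta_2$; consequently $\Gamma(v)$, being a single $G_v$-orbit by arc-transitivity, lies inside one of them. It cannot lie inside the orbit of $v$: by vertex-transitivity the same would then hold at every vertex, so there would be no edge between $\Delta_1$ and $\Delta_2$ and $\Gamma$ would be disconnected. Thus, taking $v\in\Delta_1$, we have $\Gamma(v)\subseteq\Delta_2$, and $\Gamma$ is bipartite with parts $\Delta_1,\Delta_2$. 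Fix $u\in\Gamma(v)$ and identify $\Delta_2$ with $N$ so that $u$ corresponds to $1$; then the group elements corresponding to $\Gamma(v)$ generate $N$ --- this is precisely connectedness of $\Gamma$, rephrased for the associated bipartite Cayley graph. Since $K$ fixes $u$ and fixes $\Gamma(v)$ pointwise, the remark gives that $K$ centralises $N$. As $N$ is transitive on $\Delta_1$ and on $\Delta_2$, and as $K$ fixes $v\in\Delta_1$ and $u\in\Delta_2$, the remark then gives that $K$ acts trivially on $\Delta_1\cup\Delta_2=V\Gamma$; hence $K=1$.

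Each ingredient is routine: the action-by-automorphisms statement is immediate from normality and semiregularity, and the assertion that $\Gamma(v)$ (based at a neighbour $u$) generates $N$ is the usual check that a connected Cayley or bipartite-Cayley graph has a generating connection set. I do not anticipate a genuine obstacle. The one point that needs care is that, in the two-orbit case, the part $\Delta_1$ contains $v$ but none of its neighbours, so the ``fixes a generating set'' argument cannot be applied to $\Delta_1$ directly; this is exactly why one first uses the action on $\Delta_2$ to deduce that $K$ centralises $N$, after which the semiregularity of centralisers of transitive groups disposes of $\Delta_1$ as well.
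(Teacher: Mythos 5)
Your proof is correct, and it follows the same decomposition as the paper: the normal regular case (where $\Gamma$ is a Cayley graph on $N$) and the two-orbit case (where one first shows $\Gamma$ is bipartite). The only difference is that the paper disposes of these two cases by citing \cite[Lemma~2.1]{Godsil} and \cite[Lemma~2.4]{Li} respectively, whereas you prove both from scratch via the conjugation-action observation; that observation is exactly the content of those cited lemmas, so there is nothing further to reconcile.
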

\begin{proof}
If $G$ has a normal regular group, then the result follows by~\cite[Lemma~2.1]{Godsil}. Otherwise, it is not hard to see that $\Gamma$ must be bipartite and the result follows by applying~\cite[Lemma~2.4]{Li} with $X=G$ and $N$ the bipartition-preserving subgroup of $G$.
\end{proof}

\begin{theorem}\cite{Tutte1,Tutte2}\label{TutteTheorem}
Let $\Gamma$ be a cubic graph. If $\Gamma$ is $G$-arc-transitive, then it is $(G,t+1)$-arc-regular for some $0\leq t\leq 4$. Moreover, the structure of $G_v$ is uniquely determined by $t$ and is as in Table~\ref{tab:stabs}.
\end{theorem}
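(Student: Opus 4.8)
This is Tutte's classical theorem on cubic arc-transitive graphs, so rather than seek a new argument I would reconstruct a modern version of the known proof, organised around the \emph{local kernel} $G_v^{[1]}$, by which I mean the kernel of the action of $G_v$ on $\Gamma(v)$. Equivalently, one studies the faithful action of an edge-transitive group on the trivalent tree and classifies the resulting amalgams $(G_v,G_e)$, where $G_e$ is the stabiliser of an edge and $G_v\cap G_e$ has index $3$ in $G_v$ and index $2$ in $G_e$.

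The first step is to determine the local action. Since $\Gamma$ is cubic and $G$ is arc-transitive, $G_v^{\Gamma(v)}$ is a transitive subgroup of $\Sym(3)$, hence isomorphic to $\ZZ_3$ or to $\Sym(3)$, and a short connectivity argument shows that $G_v^{[1]}\neq 1$ forces $G_v^{\Gamma(v)}\cong\Sym(3)$: a non-trivial $g\in G_v^{[1]}$ fixes $v$ and $\Gamma(v)$ pointwise, so if $w$ is a vertex moved by $g$ at minimal distance from $v$ and $z$ is the neighbour of $w$ lying on a geodesic from $v$, then $z$ and the neighbour of $z$ towards $v$ are fixed by $g$ while $w$ is not, so $g$ induces a transposition on $\Gamma(z)$ and hence $G_z^{\Gamma(z)}\cong\Sym(3)$; by arc-transitivity the same holds at $v$. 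In this case $G$ is $2$-arc-transitive. If instead $G_v^{\Gamma(v)}\cong\ZZ_3$, then $G_v^{[1]}=1$, so $|G_v|=3$ and $\Gamma$ is $(G,1)$-arc-regular; this is the $t=0$ case of the table.

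So suppose $G_v^{\Gamma(v)}\cong\Sym(3)$. Next one shows that $G_v^{[1]}$ is a $2$-group: an element of odd prime order $p$ in $G_v^{[1]}$ fixes $\Gamma(v)$ pointwise, hence for every $u\in\Gamma(v)$ acts on the two-element set $\Gamma(u)\setminus\{v\}$, so trivially, and therefore lies in $G_u^{[1]}$; by connectedness it is the identity. Let $s$ be the largest integer for which $\Gamma$ is $(G,s)$-arc-transitive (so $s\geq 2$). Using maximality of $s$ one shows that the stabiliser of an $s$-arc is trivial --- since $\Gamma$ is not $(G,s+1)$-arc-transitive, this stabiliser fixes both extensions of the arc, and shifting the arc step by step around the connected graph forces it to fix everything. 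Hence $|G_v|=3\cdot 2^{s-1}$, so $|G_v^{[1]}|=2^{s-2}$, and $\Gamma$ is $(G,s)$-arc-regular. Everything therefore reduces to the sharp bound $s\leq 5$ and the determination of the isomorphism type of $G_v$ for each admissible $s$.

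The remaining and hardest part is this sharp bound together with the classification. Finiteness of $s$ is clear, since $G$ is finite while the number of $s$-arcs grows without bound; to get the precise value one sets up, along an $s$-arc $(v_0,\dots,v_s)$, the chain of pointwise stabilisers of $\bigcup_{j\leq i}\Gamma(v_j)$, which descends from $G_{v_0}^{[1]}$ to the identity with every successive index at most $2$, and then analyses the interaction of the two local kernels $G_{v_0}^{[1]}$ and $G_{v_1}^{[1]}$ across an edge and the action of $G_{v_0}^{[1]}$ on the vertices at distances two and three from $v_0$, using commutator identities of three-subgroups-lemma type to control the resulting $2$-group. This is the technical heart of Tutte's theorem, recast amalgam-theoretically by Djokovi\'c--Miller and by Goldschmidt. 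Running through the finitely many admissible amalgams yields $|G_v|\in\{3,6,12,24,48\}$, shows that the isomorphism type of $G_v$ depends only on $t=s-1$ as recorded in Table~\ref{tab:stabs}, and confirms in each case that the stabiliser of an $s$-arc is trivial, so that $\Gamma$ is $(G,t+1)$-arc-regular. I expect this enumeration of amalgams, not any single estimate, to be the real obstacle.
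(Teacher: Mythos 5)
The paper does not prove this statement at all: it is quoted as a classical result of Tutte with a citation to \cite{Tutte1,Tutte2}, so there is no internal argument to compare yours against. Judged on its own terms, the elementary reductions you carry out are all correct and are the standard ones: the local group $G_v^{\Gamma(v)}$ is $\ZZ_3$ or $\Sym(3)$; a nontrivial local kernel $G_v^{[1]}$ forces $\Sym(3)$ locally via the ``first moved vertex'' argument; $G_v^{[1]}$ is a $2$-group because an odd-order element fixing $\Gamma(v)$ pointwise propagates across the graph; and if $s$ is maximal with $\Gamma$ being $(G,s)$-arc-transitive, then the $s$-arc stabiliser fixes both extensions of every $s$-arc, shifts around the connected graph, and is therefore trivial, giving $(G,s)$-arc-regularity and $|G_v|=3\cdot 2^{s-1}$. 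All of this is sound.

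However, the step you defer --- the bound $s\leq 5$ and the determination of the isomorphism type of $G_v$ for each $s$ --- is not a final verification but is the entire content of Tutte's theorem; everything before it is routine. As written, your proposal establishes only that $\Gamma$ is $(G,s)$-arc-regular with $|G_v|=3\cdot 2^{s-1}$ for \emph{some} $s\geq 1$, and then asserts that ``running through the finitely many admissible amalgams'' yields $s\leq 5$ and Table~\ref{tab:stabs}, without proving that the list of amalgams is finite or exhibiting it. In particular, nothing in your argument so far rules out $s=7$, nor does it show that $G_v$ is determined up to isomorphism by its order (e.g.\ that $G_v\cong\Sym(4)$ rather than some other group of order $24$ when $s=4$). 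So the proposal is an accurate roadmap to the Tutte/Djokovi\'c--Miller/Goldschmidt proof rather than a proof; to make it complete you would need to actually carry out the analysis of the chain of pointwise stabilisers along an $s$-arc and the interaction of $G_{v_0}^{[1]}$ and $G_{v_1}^{[1]}$, or else cite the literature for that step exactly as the paper does.
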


\begin{table}[hhh]
\begin{tabular}{| c| ccccc|}\hline
t& 0 & 1 & 2&3&4\\ \hline
$G_v$& $\ZZ_3$ & $\Sym(3)$ & $\Sym(3)\times\ZZ_2$ & $\Sym(4)$ & $\Sym(4)\times\ZZ_2$ \\ \hline
\end{tabular}
\caption{Vertex-stabilisers in cubic $(t+1)$-arc-regular graphs}
\label{tab:stabs}
\end{table}

\begin{proposition}\cite[Corollary 4.6]{cubicPrimoz}\label{prop:yoyo}
Let $\Gamma$ be a cubic $(G,t+1)$-arc-transitive graph. If $G$ is soluble, then $t\leq 2$. Moreover, if $t=2$, then $\Gamma$ is a regular cover of  $\K_{3,3}$.
\end{proposition}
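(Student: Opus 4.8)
The plan is to induct on the number of vertices of $\Gamma$. By Theorem~\ref{TutteTheorem} we may assume $t+1$ is the arc-regularity parameter of $\Gamma$, so that $t\leq 1$ precisely when $|G_v|\leq 6$. Let $N$ be a minimal normal subgroup of $G$; since $G$ is soluble, $N$ is an elementary abelian $p$-group for some prime $p$. I would then organise the argument according to the number of orbits of $N$ on the vertices of $\Gamma$.

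If $N$ has at least three orbits, then $N$ is semiregular and $\Gamma$ is a regular $N$-cover of $\Gamma/N$, which is a cubic $(G/N,t+1)$-arc-transitive graph with $G/N$ soluble and strictly fewer vertices; this normal-quotient reduction is standard for graphs of prime valency (see~\cite[Lemma 3.2]{GenLost}). By induction $t\leq 2$, and if $t=2$ then $\Gamma/N$ is a regular cover of $\K_{3,3}$, hence so is $\Gamma$, since a composition of regular covers is a regular cover. If $N$ is transitive then, being abelian, it is regular, and if $N$ has exactly two orbits then, using $N\norml G$, arc-transitivity and connectedness, $\Gamma$ is bipartite with the two $N$-orbits as its parts. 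So if $N$ is transitive, or has two orbits and is semiregular, then $G$ has a normal semiregular subgroup with at most two orbits, and Lemma~\ref{lemma:faithful} forces the subgroup of $G$ fixing a vertex and all its neighbours to be trivial; hence $|G_v|\leq 6$ and $t\leq 1$. This leaves one configuration, the crux of the proof: $N$ has two orbits $A$ and $B$ and is \emph{not} semiregular.

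In that case I claim $\Gamma\cong\K_{3,3}$, which finishes the proof since $\K_{3,3}$ is $3$-arc-regular and is trivially a regular cover of itself. To prove the claim: for $v\in A$, $N_v^{\Gamma(v)}$ is a normal subgroup of the transitive degree-$3$ group $G_v^{\Gamma(v)}$, hence trivial or of order $3$; it is not trivial, for otherwise $N_v$ fixes $\Gamma(v)$ pointwise, and applying the same to $N_u$ for $u\in B$ (conjugating by an element of $G$ swapping $A$ and $B$) gives $N_v=N_u$ for adjacent $u,v$, so $N_v=1$ by connectedness, contradicting non-semiregularity. Thus $N_v^{\Gamma(v)}\cong\ZZ_3$ and $p=3$. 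Since $N$ is abelian, $N_v$ does not depend on $v\in A$; write $P_A:=N_v$ for $v\in A$ and $P_B:=N_u$ for $u\in B$, so that $P_A$ is exactly the pointwise stabiliser of $A$ in $N$ and fixes $A$ pointwise, and similarly for $P_B$. As $P_A$ moves points of $B$ while $P_B$ does not, $P_A\neq P_B$, so $P_A\cap P_B=1$. Now $\langle P_A,P_B\rangle\norml G$: the index-$2$ bipartition-stabiliser $G^+$ fixes each of $\{N_v:v\in A\}=\{P_A\}$ and $\{P_B\}$, while any element of $G\setminus G^+$ interchanges $P_A$ and $P_B$ (each being the pointwise stabiliser in $N$ of the corresponding part). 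By minimality of $N$ and $P_A\neq P_B$ we get $N=\langle P_A,P_B\rangle\cong\ZZ_3^2$, so $|A|=|N:P_A|=3$ and $\Gamma$ is a connected cubic bipartite graph on $3+3$ vertices, i.e.\ $\Gamma\cong\K_{3,3}$.

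The main obstacle is this final case. Two points require genuine care: first, the legitimacy of the reduction ``at least three orbits implies $N$ semiregular'' — an intransitive normal subgroup can act transitively on a neighbourhood, but a short edge-counting argument shows this forces it to have at most two orbits; second, squeezing enough rigidity out of the two-orbit, non-semiregular configuration to pin $\Gamma$ down to $\K_{3,3}$, the key being that $N$ is abelian and that $\langle P_A,P_B\rangle$ is normal in all of $G$, not merely in $G^+$.
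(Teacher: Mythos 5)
You should first note that the paper does not actually prove this proposition: it is quoted verbatim from \cite[Corollary 4.6]{cubicPrimoz}, so there is no in-text argument to compare yours against. Your strategy --- induction on the number of vertices via a minimal normal (elementary abelian) subgroup $N$, splitting according to whether $N$ has at least three orbits (normal quotient reduction and composition of regular covers), at most two orbits and semiregular (Lemma~\ref{lemma:faithful} forces $|G_v|\leq 6$ and hence $t\leq 1$), or two orbits and not semiregular --- is the natural self-contained route, and every case except the last is handled correctly.

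In the last case there is one step whose justification does not work as written: ``By minimality of $N$ and $P_A\neq P_B$ we get $N=\langle P_A,P_B\rangle\cong\ZZ_3^2$.'' Minimality of $N$ does give $N=\langle P_A,P_B\rangle=P_A\times P_B$ (this is a nontrivial subgroup of $N$ that you have shown to be normal in $G$), but it does not give $|P_A|=3$: a priori $P_A=N_v$ is only known to surject onto $N_v^{\Gamma(v)}\cong\ZZ_3$ and could be larger, and since $P_A$ and $P_B$ are individually normal only in the index-two bipartition-preserving subgroup $G^+$, the minimality of $N$ as a normal subgroup of $G$ says nothing about their proper subgroups. The gap is genuine but easily repaired from ingredients already on your page: the kernel of $N_v$ acting on $\Gamma(v)$ fixes $v\in A$ and every $u\in\Gamma(v)\subseteq B$, hence is contained in $N_v\cap N_u=P_A\cap P_B=1$, so $|P_A|=|N_v|=|N_v^{\Gamma(v)}|=3$ and likewise $|P_B|=3$; then $|N|=9$, $|A|=|N:P_A|=3$, and your conclusion $\Gamma\cong\K_{3,3}$ follows. (You should also state explicitly that the bipartiteness argument you gave in the semiregular two-orbit sub-case applies verbatim in this final case, since you use $\Gamma(v)\subseteq B$ throughout.) With these two local repairs the proof is complete.
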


\begin{lemma}\label{coollemma}
Let $\Gamma$ be a cubic $G$-arc-transitive graph and let $N$ be a normal subgroup of $G$ that is locally-transitive on $\Gamma$. If $|N_v|\leq 12$, then $|G_v|\leq 12$.
\end{lemma}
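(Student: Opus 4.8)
The plan is to argue by contradiction, so suppose $|G_v|>12$. By Theorem~\ref{TutteTheorem}, $G_v$ is then isomorphic to $\Sym(4)$ or to $\Sym(4)\times\ZZ_2$, and in either case $G_v^{\Gamma(v)}\cong\Sym(3)$. Since $N$ is normal in $G$ we have $N_v\trianglelefteq G_v$, and since $N$ is locally-transitive the induced group $N_v^{\Gamma(v)}\leq G_v^{\Gamma(v)}\cong\Sym(3)$ is transitive on $\Gamma(v)$, so $3$ divides $|N_v|$. Running through the normal subgroups of $\Sym(4)$ and of $\Sym(4)\times\ZZ_2$, the only one whose order is divisible by $3$ and is at most $12$ is a copy of $\Alt(4)$; hence $N_v\cong\Alt(4)$ and $|N_v|=12$.

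The next step extracts the key local fact. As $\Alt(4)$ has no subgroup of index $2$, it has no quotient isomorphic to $\Sym(3)$, so the transitive group $N_v^{\Gamma(v)}$ must be $\ZZ_3$, acting regularly on the three neighbours of $v$. Therefore, for each $w\in\Gamma(v)$ the stabiliser of $w$ in $N_v$, namely $N_v\cap N_w$, equals the kernel $N_v^{[1]}$ of the action of $N_v$ on $\Gamma(v)$, which has order $4$. Because $G$ is vertex-transitive and $N$ is normal in $G$, the stabiliser $N_w$ is also isomorphic to $\Alt(4)$ and acts as $\ZZ_3$ on $\Gamma(w)$; applying the same observation with the roles of $v$ and $w$ interchanged gives $N_w\cap N_v=N_w^{[1]}$. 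Since $N_v\cap N_w=N_w\cap N_v$, we deduce $N_v^{[1]}=N_w^{[1]}$ for every edge $\{v,w\}$ of $\Gamma$.

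A connectedness argument now finishes the proof. From $N_v^{[1]}=N_w^{[1]}$ for all adjacent $v,w$ and the connectedness of $\Gamma$, the subgroup $N_v^{[1]}$ is independent of the vertex $v$; denote it by $K$. For every vertex $x$ we have $K=N_x^{[1]}\leq N_x$, so $K$ fixes $x$; hence $K$ fixes every vertex of $\Gamma$. As $N$ acts faithfully on $\Gamma$, this forces $K=1$, contradicting $|K|=4$. Therefore $|G_v|\leq 12$.

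I expect the heart of the matter, and the step most likely to need care, to be the identity $N_v\cap N_w=N_v^{[1]}$: the whole argument depends on $N_v\cong\Alt(4)$ forcing the local action to be the \emph{regular} group $\ZZ_3$ rather than $\Sym(3)$, which is precisely what makes the arc-stabiliser collapse onto the pointwise kernel $N_v^{[1]}$ and lets this local kernel propagate, along paths, to a subgroup fixing every vertex. In particular, note that this route avoids having to decide whether $N$ itself is vertex-transitive. The remaining ingredients — the appeal to Theorem~\ref{TutteTheorem} and the enumeration of the normal subgroups of $\Sym(4)$ and $\Sym(4)\times\ZZ_2$ — are routine.
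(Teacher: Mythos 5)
Your proof is correct and follows the paper's argument: contradiction via Theorem~\ref{TutteTheorem}, forcing $N_v\cong\Alt(4)$ with regular local action $\ZZ_3$, and then concluding that $N_v$ must in fact have order $3$. The only difference is that you spell out in full the standard "kernel propagation" argument for locally regular actions on connected graphs, which the paper leaves implicit in the phrase "but this implies that $N_v$ itself has order $3$".
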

\begin{proof}
Suppose, by contradiction, that $|G_v|> 12$. By Theorem~\ref{TutteTheorem}, $G_v$ is isomorphic to either $\Sym(4)$ or $\Sym(4)\times\ZZ_2$. Now, $N_v$ is a normal subgroup of $G_v$ of order divisible by $3$. Since $|N_v|\leq 12$, it is not hard to check that this implies $N_v\cong\Alt(4)$. Since $N_v^{\Gamma(v)}$ is a quotient of $N_v$ with order divisible by $3$, we have that $N_v^{\Gamma(v)}$ is regular of order $3$. As $N$ is normal in a vertex-transitive group, this holds for every vertex, but this implies that $N_v$ itself has order $3$, a contradiction.
\end{proof}

\begin{lemma}\label{InsolubleQuotient}
Let $\Gamma$ be a $G$-locally-transitive cubic graph. If $N$ is a normal subgroup of $G$ such that $G/N$ is insoluble, then $N$ has at least three orbits and is semiregular on the vertices of $\Gamma$. In particular, $\Gamma$ is a regular cover of $\Gamma/N$.
\end{lemma}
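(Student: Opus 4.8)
The plan is to show that if $N$ has at most two orbits on vertices, or if $N$ is not semiregular, then $G/N$ is soluble, contrary to hypothesis. Throughout I would use two facts. First, for a $G$-locally-transitive cubic graph, $G_v^{\Gamma(v)}$ is a transitive group of degree $3$, so $|G_v^{\Gamma(v)}|$ divides $6$; moreover the kernel of the action of $G_v$ on $\Gamma(v)$ is a $2$-group (in the $G$-arc-transitive case this is immediate from Theorem~\ref{TutteTheorem}, and in general it follows by a short induction along a spanning tree, since this kernel induces a group of order at most $2$ on the neighbourhood of each neighbour of $v$). Hence $|G_v|$ divides $2^a\cdot 3$ for some $a$, so $G_v$, and every section of it, is soluble. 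Second, a connected $G$-locally-transitive graph is $G$-edge-transitive, and therefore either $G$-arc-transitive, or bipartite with the two parts being the $G$-orbits.

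\emph{Step 1 ($N$ has at least three orbits).} It suffices to show that if $N$ has at most two orbits then $G/N$ is soluble. If $N$ is transitive, then $\Gamma$ is $G$-vertex-transitive, $G=NG_v$, and $G/N\cong G_v/N_v$ is soluble. If $N$ has exactly two orbits $O_1,O_2$, these form a $G$-invariant partition; letting $H\trianglelefteq G$ be the kernel of the action of $G$ on $\{O_1,O_2\}$, we have $N\le H$ and $[G:H]\le 2$. Since $N$ is transitive on $O_1$, so is $H$, and since $G_v\le H$ for $v\in O_1$ we get $H=NG_v$, so $H/N\cong G_v/N_v$ is soluble and hence so is $G/N$. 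As $G/N$ is insoluble, $N$ has at least three orbits.

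\emph{Step 2 (the neighbourhood of every vertex meets three distinct $N$-orbits).} Suppose some vertex $w$ violates this. Since $G_w$ is transitive on $\Gamma(w)$ and permutes the $N$-orbits, the nonempty intersections of $\Gamma(w)$ with the $N$-orbits partition $\Gamma(w)$ into parts of equal size; as $|\Gamma(w)|=3$ and this partition is not into three singletons, $\Gamma(w)$ lies in a single $N$-orbit $W$. Put $B=w^N$; conjugating $\Gamma(w)\subseteq W$ by elements of $N$ shows that every vertex of $B$ has all its neighbours in $W$. If $B=W$, then $\Gamma[B]$ is a union of connected components, so $\Gamma[B]=\Gamma$ and $N$ has a single orbit, contradicting Step~1. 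If $B\ne W$, then $B$ has valency $1$ in $\Gamma/N$. When $\Gamma$ is $G$-arc-transitive, $\Gamma/N$ is then connected and $1$-regular, so $\Gamma/N\cong\K_2$ and $N$ has two orbits, again contradicting Step~1. When $\Gamma$ is bipartite with parts $X,Y$ and $B\subseteq X$, every $N$-orbit inside $X$ lies in the $G/N$-orbit of $B$ and hence also has valency $1$ in $\Gamma/N$; but a connected bipartite graph in which every vertex of one part has valency $1$ is a star, so $Y$ consists of a single $N$-orbit, i.e.\ $N$ is transitive on $Y$. Then $G=NG_u$ for $u\in Y$, so $G/N\cong G_u/N_u$ is soluble, contradicting insolubility of $G/N$.

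\emph{Conclusion.} Since $N$ is transitive on each of its orbits, the neighbours of $v^N$ in $\Gamma/N$ are exactly the $N$-orbits of the neighbours of $v$, so by Step~2 the projection $\pi\colon\Gamma\to\Gamma/N$ restricts to a bijection $\Gamma(v)\to(\Gamma/N)(v^N)$ for every vertex $v$; thus $\Gamma/N$ is cubic and $\Gamma$ is a regular $N$-cover of $\Gamma/N$. If some nontrivial $n\in N$ fixed a vertex $v$, then since $n$ permutes $\Gamma(v)$ within $N$-orbits the injectivity just established would force $n$ to fix $\Gamma(v)$ pointwise, and propagating along a spanning tree (using that $\pi$ is a local bijection at every vertex) shows $n$ fixes every vertex, so $n=1$; hence $N$ is semiregular. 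I expect Step~2 to be the crux, and within it the case where $\Gamma$ is not $G$-vertex-transitive: excluding the configuration in which all neighbours of a vertex lie in a single $N$-orbit is exactly where insolubility of $G/N$, and not merely the weaker statement that $N$ has at least three orbits, is used. A lesser point needing attention --- precisely because the hypothesis is local transitivity rather than arc-transitivity --- is the solubility of $G_v$.
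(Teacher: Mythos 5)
Your proof is correct and follows essentially the same route as the paper's: rule out at most two $N$-orbits by observing that $G/N$ is then a section of the soluble $\{2,3\}$-group $G_v$ (up to index $2$), and then use local primitivity (transitivity of degree $3$) to get semiregularity and the covering property. The only difference is one of detail: the paper treats the second half --- that a normal subgroup with at least three orbits in a locally-primitive graph is semiregular and yields a regular cover --- as folklore (citing \cite{GenLost}), whereas you prove it from scratch via your Step~2, and you also supply the justification that $G_v$ is a $\{2,3\}$-group, which the paper assumes silently.
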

\begin{proof}
If $N$ has at most two orbits on vertices, then $|G:N|$ divides $2|G_v|$. Since $|G_v|$ is a $\{2,3\}$ group, so is $G/N$ and thus $G/N$ is soluble, a contradiction. If follows that $N$ has at least three orbits on vertices and, since $G$ is locally-primitive, $N$ must be semiregular.
\end{proof}

Note that, for an integer $n$, the property of having a cyclic group of index dividing $n$ is inherited by normal subgroups and quotients. This fact will be used repeatedly throughout the paper.

\begin{lemma}
\label{thm:quosolradical}
Let $\Gamma$ be a cubic $(G,t+1)$-arc-regular graph such that $G$ is insoluble and let $S$ be the soluble radical of $G$. 
\begin{enumerate}
\item If $C$ is a semiregular cyclic subgroup of $G$ with an odd number of orbits, then $|C\cap S|$ is odd and $|G/S:CS/S|_2|S|_2=2^t$. \label{quoradical1}
\item If a Sylow $2$-subgroup of $G$ has a cyclic subgroup of index at most $2^t$, then $G/S$ is almost simple. \label{quoradical2}
\end{enumerate}
\end{lemma}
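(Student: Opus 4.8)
I would prove the two parts separately, the common tool being reduction modulo the soluble radical $S$; note that $G/S$ is insoluble with trivial soluble radical.

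\emph{Part (1).} By Lemma~\ref{IstvanLemma}, since $\Gamma$ is cubic and $C$ has an odd number of orbits, $C$ has even order and its unique involution $z$ reverses an edge of $\Gamma$. If $|C\cap S|$ were even it would contain $z$, so $S$ would contain an edge-reversing element, and Lemma~\ref{lem:edge-rev} would make $\Gamma$ $S$-vertex-transitive; then $|G:S|$ divides $|G_v|=3\cdot2^t$, so $G/S$ is a soluble $\{2,3\}$-group, hence trivial, contradicting insolubility of $G$. Thus $|C\cap S|$ is odd. The identity $|G/S:CS/S|_2\,|S|_2=2^t$ is then a routine count of $2$-parts: writing $k$ for the (odd) number of $C$-orbits, $|V(\Gamma)|_2=|C|_2$, hence $|G|_2=|G_v|_2\,|V(\Gamma)|_2=2^t|C|_2$, while $|CS/S|_2=|C|_2$ since $|C\cap S|$ is odd; dividing through gives the claim.

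\emph{Part (2).} First I would pass to $\bar\Gamma=\Gamma/S$ and $\bar G=G/S$. By Lemma~\ref{InsolubleQuotient} (with $N=S$), $S$ is semiregular with at least three orbits and $\Gamma$ is a regular $S$-cover of the cubic graph $\bar\Gamma$; moreover the kernel of the action of $\bar G$ on $\bar\Gamma$ is a normal subgroup which, by the local-bijection property of the cover together with a connectedness/propagation argument, fixes a vertex and all its neighbours at every vertex, hence is semiregular, hence equals $S$. So $\bar G$ acts faithfully and arc-transitively on $\bar\Gamma$, and since $\bar G_{v^S}\cong G_v$ the graph $\bar\Gamma$ is cubic $(\bar G,t+1)$-arc-regular. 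Now set $N=\mathrm{soc}(\bar G)=T_1\times\cdots\times T_m$ with the $T_i$ nonabelian simple; as $C_{\bar G}(N)=1$, ``$\bar G$ almost simple'' is exactly ``$m=1$''. The first step is a rank bound: each Sylow $2$-subgroup $R_i$ of $T_i$ is non-cyclic (else $T_i$ has a normal $2$-complement by Burnside), so $R_i$ needs at least two generators, whence $R_1\times\cdots\times R_m$ needs at least $2m$ generators; but a Sylow $2$-subgroup of $N$ lies in one of $\bar G$, so intersecting with the hypothesised cyclic subgroup of index $\leq2^t$ (a property inherited by the quotient $\bar G$) shows $R_1\times\cdots\times R_m$ has a cyclic subgroup of index $\leq2^t$, and a $2$-group with a cyclic subgroup of index $2^s$ is $(s+1)$-generated. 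Hence $2m\leq t+1\leq5$, so $m\leq2$, with $t\geq3$ when $m=2$.

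It then remains to exclude $m=2$, which I expect to be the crux, since the rank bound alone only gives $m\leq2$ and $m=2$ genuinely occurs for cubic arc-transitive graphs without the Sylow hypothesis (for instance $\bar G\cong\Alt(5)\times\Alt(6)$ with $t=0$); so this step must combine $t\geq3$ with the graph structure. Here $\bar G/N$ embeds in $\mathrm{Out}(T_1\times T_2)$ and is soluble by Schreier's conjecture, and since $t\geq3$ the local action is $\Sym(3)$, so $\bar G$ is locally-primitive and each $T_i$ is transitive or semiregular on $\bar\Gamma$. If $T_i$ is transitive for some $i$, then $\bar G=T_i\bar G_{v^S}$, so $\bar G/T_i$ is a quotient of $\bar G_{v^S}$ and hence a $\{2,3\}$-group, and since $C_{\bar G}(T_i)\cap T_i=1$ the subgroup $C_{\bar G}(T_i)$ embeds in $\bar G/T_i$ and is soluble — impossible, as the other simple factor lies in $C_{\bar G}(T_i)$. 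Otherwise both factors, hence $N$, are semiregular; then $\bar\Gamma$ is a regular $N$-cover of the cubic graph $\bar\Gamma/N$, the same propagation argument shows $\bar G/N$ acts faithfully and arc-transitively on $\bar\Gamma/N$ with parameter still $t\geq3$, and this contradicts Proposition~\ref{prop:yoyo} as $\bar G/N$ is soluble. Hence $m=1$, i.e.\ $G/S$ is almost simple. The most delicate bookkeeping is the repeated use of the fact that ``the covering action over a semiregular normal subgroup is faithful, with unchanged arc-transitivity parameter'', together with the care needed to see that $t\geq3$ is precisely what makes the local-primitivity dichotomy bite.
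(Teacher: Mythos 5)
Your part (1) is essentially the paper's proof: Lemma~\ref{IstvanLemma} plus Lemma~\ref{lem:edge-rev} to rule out $|C\cap S|$ being even (the paper cites Lemma~\ref{InsolubleQuotient} for the resulting contradiction; you re-derive it inline), followed by the same count of $2$-parts. For part (2) your route genuinely differs from the paper's in the hard step: after bounding $m\leq 2$ (your rank bound $2m\leq t+1$ versus the paper's index bound $2^t\geq 2\cdot 4^{m-1}$ --- both work and both give $t\geq 3$ when $m=2$), the paper orders the factors by Sylow $2$-exponent, passes to the regular cover $(\Gamma/S)/T_1$, and uses $|T_2|_2\leq 8$ together with Lemma~\ref{coollemma} to force $t\leq 2$; your centraliser argument ($C_{\overline{G}}(T_i)$ meets $T_i$ trivially, hence embeds in the $\{2,3\}$-group $\overline{G}/T_i$, yet contains the other simple factor) is shorter and avoids Lemma~\ref{coollemma} entirely.

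However, there is a gap in your case analysis for $m=2$: the dichotomy ``each $T_i$ is transitive or semiregular'' is false. For a normal subgroup of a connected locally-primitive arc-transitive graph the correct alternative is ``semiregular, or locally transitive with at most two orbits'', and two configurations escape your analysis. (a) Some $T_i$ is locally transitive with exactly two orbits: your centraliser argument survives, since then $|\overline{G}:T_i\overline{G}_{v^S}|\leq 2$ and $\overline{G}/T_i$ is still a $\{2,3\}$-group, but this needs to be said. (b) $N=T_1\times T_2$ is semiregular with at most two orbits: then $\overline{\Gamma}/N$ has at most two vertices, it is not a cubic graph, and the regular-cover/Proposition~\ref{prop:yoyo} argument you invoke does not apply; here one needs Lemma~\ref{lemma:faithful}, which gives $t\leq 1$ and contradicts $t\geq 3$ (this is exactly how the paper disposes of the semiregular case). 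Both repairs are short, but as written the proof does not cover these cases.
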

\begin{proof}We first prove~\eqref{quoradical1}. Suppose, by contradiction, that $|C\cap S|$ is even. This implies that $S$ contains the unique involution of $C$. By Lemmas~\ref{IstvanLemma} and~\ref{lem:edge-rev}, it follows that $S$ is vertex-transitive, contradicting Lemma~\ref{InsolubleQuotient}. We conclude that $|C\cap S|$ is odd. Note that $|G/S:CS/S|=|G|/|CS|=|G||C\cap S|/|S||C|=3\cdot 2^tk|C\cap S|/|S|$, where $k$ is the number of orbits of $C$. Since $|C\cap S|$ is odd it follows that $|G/S:CS/S|_2|S|_2=2^t$. This concludes the proof of~\eqref{quoradical1}.

We now prove~\eqref{quoradical2}. By Theorem~\ref{TutteTheorem}, we have $0\leq t\leq 4$. By Lemma~\ref{InsolubleQuotient}, $\Gamma$ is a regular cover of $\Gamma/S$ and $G_v\cong (G/S)_{v^S}$. Let $N$ be the socle of $G/S$. Write $N=T_1\times\dots\times T_m$, such that the $T_i$'s are nonabelian simple and ordered such that the exponent of their Sylow $2$-subgroups is non-increasing. We suppose that $m\geq 2$ and will obtain a contradiction.

Let $N_2$ be a Sylow $2$-subgroup of $N$. Recall that the Sylow $2$-subgroup of a nonabelian simple group is never cyclic and, in particular, has order at least $4$. Thus, any cyclic subgroup of $N_2$ has index at least $2|T_2|_2\cdots|T_m|_2$. On the other hand, $N_2$ has a cyclic subgroup of index at most $2^t$. It follows that $2^t\geq 2|T_2|_2\cdots|T_m|_2\geq 2\cdot 4^{m-1}$. Since $t\leq 4$, we have $m=2$, $|T_2|_2\leq 8$ and $t\geq 3$.

If $N$ has at least three orbits on the vertices of $\Gamma/S$, then $\Gamma/S$ is a regular cover of $(\Gamma/S)/N$. By the Schreir Conjecture, $(G/S)/N$ is soluble and thus $t\leq 2$ by Proposition~\ref{prop:yoyo}, a contradiction. It follows that $N$ has at most two orbits. If $N$ is semiregular, then it follows by Lemma~\ref{lemma:faithful} that $t\leq 1$. We may thus assume that $N$ is locally-transitive.

We may thus apply Lemma~\ref{InsolubleQuotient} to conclude that $\Gamma/S$ is a regular cover of $(\Gamma/S)/T_1$. In particular, $N/T_1$ is locally-transitive. Since $N/T_1\cong T_2$, $|T_2|_2\leq 8$ and $(\Gamma/S)/T_1$ has even order, we find that $|(N/T_1)_{\overline{v}}|_2\leq 4$, where $\overline{v}$ is a vertex in $(\Gamma/S)/T_1$, and thus $|N_{v^S}|=|(N/T_1)_{\overline{v}}|\leq 12$. By Lemma~\ref{coollemma}, this implies $|G_v|=|(G/S)_{v^S}|\leq 12$ and thus $t\leq 2$, a contradiction.
\end{proof}

\begin{proposition}\label{allsimples}
Let $t$ be an integer with $0\leq 4\leq t$, let $k$ be a squarefree positive integer coprime to $6$ and let $\overline{G}$ be an almost simple group with order divisible by $3$. If $\overline{G}$ has a cyclic subgroup $\overline{C}$ of even order and index dividing $3\cdot 2^tk$, then $\overline{G}$, $|\overline{C}|$ and $\log_2|\overline{G}:\overline{C}|_2$ are given in Table \ref{tab:ASposs}.
\end{proposition}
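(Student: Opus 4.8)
The plan is to prove Proposition~\ref{allsimples} using the classification of finite simple groups, organised around the isomorphism type of $T:=\mathrm{soc}(\overline G)$, which is a nonabelian simple group. First I would reduce to a statement purely about $T$. Put $C:=\overline C\cap T$; this is a cyclic subgroup of $T$, and since $|T:C|=|T\overline C:\overline C|$ divides $|\overline G:\overline C|$, which divides $3\cdot 2^tk$, and since $k$ is squarefree and coprime to $6$ while $t\le 4$ by Theorem~\ref{TutteTheorem}, we obtain for every prime $r$ that
\[
\frac{|T|_r}{|C|_r}\ \text{divides}\ \begin{cases}16,& r=2,\\ 3,& r=3,\\ r,& r\ge 5.\end{cases}
\]
Thus $C$ must contain all but a boundedly small part of every Sylow subgroup of $T$, which is an extremely strong restriction. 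I would also record that $3$ divides $|\overline G|=|T|\,|\overline G:T|$ with $|\overline G:T|$ dividing $|\mathrm{Out}(T)|$, so that either $3\mid|T|$ or $T\cong\mathrm{Sz}(q)$ with $3\mid(2m+1)$, where $q=2^{2m+1}$; the latter possibility will be narrowed to $T\cong\mathrm{Sz}(8)$ below.

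Next I would run through the families of simple groups. For $T=\Alt(n)$ the maximal order of an element of $T$ is Landau's function $g(n)$, which grows far more slowly than $|T|$; imposing that $|T:C|$ be squarefree away from $2$ with $3$-part at most $3$ then forces $n$ to be tiny, and the resulting handful of groups (together with their over-groups $\Sym(n)$) is settled by inspecting cycle types. The sporadic groups are a finite check against tabulated element orders. The substantive case, and the one I expect to be the main obstacle, is the groups of Lie type. Here the leverage is the defining characteristic $p$: if $T$ is of Lie type over $\mathbb{F}_q$ with $q=p^f$ and $N$ positive roots, then $|T|_p=q^N$ up to a factor coprime to $p$, whereas every cyclic subgroup of $T$ has $p$-part at most $p^{c}$, where $c=c(T)$ is a small constant depending only on the type (essentially $\lceil\log_p h\rceil$ with $h$ the Coxeter number, since it is governed by the exponent of a Sylow $p$-subgroup). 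The displayed divisibility then forces $fN-c\le 4$ when $p=2$ and $fN-c\le 1$ when $p\ge 3$, and in every case this collapses $T$ to a very short list: $\PSL_2(q)$ for small $f$, $\PSL_3(2)$, $\mathrm{Sz}(8)$, and at most a couple of further small candidates such as $\PSU_3(4)$ and $\PSL_4(2)\cong\Alt(8)$, which are then individually kept or discarded. For each surviving pair $(T,q)$ I would enumerate the conjugacy classes of cyclic subgroups of $T$ — these lie in normalisers of maximal tori, or are read off directly from the small order of $T$ — and retain exactly those whose index in $T$ is cubefree with $2$-part dividing $16$ and $3$-part dividing $3$.

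Finally, with the list of admissible socles $T$ in hand, I would for each one determine which almost simple groups $\overline G$ with socle $T$ still possess a cyclic subgroup $\overline C$ of \emph{even} order and of index dividing $3\cdot 2^tk$ for some $t\le 4$; this is where the diagonal and field automorphisms of $T$ enter, and where one must be careful to discard the odd-order cyclic subgroups (in particular the purely unipotent ones). Reading off $|\overline C|$ and $\log_2|\overline G:\overline C|_2$ in each surviving case then produces Table~\ref{tab:ASposs}. The entries coming from $\PSL_2(p)$ and $\PSL_2(p^2)$, with $p$ a prime coprime to $6$, form infinite families cut out by congruence conditions ensuring that $p-1$, respectively $p^2+1$, has the right prime factorisation, while the remaining entries (from $\Alt(5)$, $\Alt(6)\cong\PSL_2(9)$, $\PSL_2(7)$, $\mathrm{Sz}(8)$, a few small $\PSL_2(2^f)$, $\Alt(7)$, $\Sym(7)$ and their relatives) form a short finite list. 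The genuinely delicate part is the uniform treatment of $\PSL_2(q)$ and its automorphic over-groups $\PGL_2(q)$, $\PSigmaL_2(q)$ and $\PGammaL_2(q)$: one has the three families of cyclic subgroups $\ZZ_p$ and those of order $(q\pm 1)/\gcd(2,q-1)$, whose orders and behaviour under the outer automorphisms must all be tracked, and pinning down precisely when the index is cubefree with the prescribed $2$- and $3$-parts (and when a cyclic subgroup of even order of that index actually exists) requires careful, if elementary, bookkeeping; everything else is a finite verification.
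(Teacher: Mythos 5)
Your proposal follows essentially the same route as the paper: reduce to the socle $T$, translate the index condition into prime-local constraints (the $r$-part of $|T:\overline C\cap T|$ divides $16$, $3$, or $r$), kill almost all groups of Lie type by comparing $|T|_r$ in the defining characteristic with the exponent of a Sylow $r$-subgroup, dispose of the alternating and sporadic groups by small finite checks, and finish with Dickson's description of the cyclic subgroups of $\PSL(2,q)$ and their extensions by diagonal and field automorphisms (including the centraliser argument forcing $\PSigmaL(2,r^2)\le\overline G$ and $|\overline C|=2r$ when $f=2$). The only cosmetic difference is your use of Landau's function for $\Alt(n)$ where the paper argues directly from the Sylow $3$-subgroup needing a cyclic subgroup of index dividing $3$; both come down to the same prime-part constraint.
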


\begin{table}[hhh]
\begin{tabular}{|c|l|c|c|c|c|}
\hline
&  $\overline{G}$   &  $|\overline{C}|$  & $\log_2|\overline{G}:\overline{C}|_2$ & \begin{tabular}{@{}c@{}}Upper bound \\ on $t$\end{tabular} & \begin{tabular}{@{}c@{}}Upper bound \\ on $\log_2|S|_2$\end{tabular} \\
\hline
(1)&$\Alt(5)$ &  2   & 1&1&0\\
(2)& $\Sym(5)$ &  2, 4 or 6    &1 or 2&2&1\\
(3)& $\Sym(6)$ &  6   &3&3&0\\
(4) &$\Aut(\Sym(6))$ &   6  &4&4&0\\
(5)&  $\Alt(7)$  &  6 & 2 &3&1\\
(6)& $\Sym(7)$  &  6 or 12   & 2 or 3&4&2\\
(7)& $\M_{11}$ & 6 & 3 &3&0\\
(8)& $\J_1$  &  2, 6 or 10 &    2&2&0\\
(9)&$\Aut({}^2B_2(8))$ & 4 or 12 & 4 & 0&--\\
(10)&$\PSL(2,2^4)$ & 2  & 3 &1&--\\
(11)&$\PSL(2,2^4).2$ & 2, 4, 6 or 10  & 3 or 4 & 2&--\\
(12)&$\PGammaL(2,2^4)$ & 4, 8 or 12  & 3 or 4 &2&--\\
(13)&$\PSL(2,2^5)$ & 2  & 4 &1&--\\
(14)&$\PGammaL(2,2^5)$ & 2 or 10  & 4 &1&--\\
(15)&$\PSL(2,r)$, $r\geq 7$&  $\leq(r+1)/2$   &$\geq 1$&3&2\\
(16)&$\PGL(2,r)$, $r\geq 7$ & $\leq r+1$  &$\geq 1$&3 &2 \\
(17)&$\PSigmaL(2,r^2)$, $r\geq 5$   & $2r$   & $\geq3$ &4&1\\
(18) &$\PGammaL(2,r^2)$, $r\geq 5$   & $2r$   & $\geq 4$ &4&0\\
\hline
\end{tabular}
\caption{} \label{tab:ASposs}
\end{table}

\begin{proof}
Let $T$ be the socle of $\overline{G}$. Note that $|T:T\cap \overline{C}|$  divides $3\cdot 2^tk$, this will play a crucial role.

If $T\cong\Alt(n)$, then $n< 9$ since the Sylow $3$-subgroup of $\overline{G}$ contains a cyclic subgroup of index dividing $3$. The cases $n\in\{5,6,7,8\}$ yield rows $(1-6)$ of Table~\ref{tab:ASposs}. 

Suppose now that $T$ is a sporadic simple group (including the Tits group). By considering the order of elements in  $T$ (see \cite{atlas}), one can check that $T$ does not have a cyclic subgroup of index dividing $3\cdot 2^tk$ unless $T$ is isomorphic to the Matthieu group $\M_{11}$ or the Janko group $\J_1$. Both of these have trivial outer automorphism group, hence $\overline{G}=T$ and it is easy to check that $\overline{C}$ must be as in rows (7) and (8) of Table~\ref{tab:ASposs}. 

From now on, we may thus assume that $T$ is a  simple group of Lie type, of characteristic $r$, say. We record the order and a crude upper bound on the exponent of a Sylow $r$-subgroup of $T$ in Table \ref{tab:lietype}. The orders can be found in \cite[p xvi]{atlas}, while bounds for exponents are obtained by first taking the smallest dimension $n$ of an irreducible representation of $T$ (or some covering group) over a field of characteristic $r$ from \cite[Table 5.4C]{KL},  and then using the fact that an $r$-element in $\GL(n,r^f)$ has order at most $r^e$ where $e=\lceil \log_r n\rceil \leq (n+1)/2$ (see~\cite[\S16.5]{HB} for example).

\begin{center}
\begin{table}
\begin{tabular}{|l|l|c|c|}
\hline
  $T$  &  $|T|_r$ & Upper bound on $r$-exponent & Condition   \\
\hline
$\PSL(n,r^f)$  &  $r^{fn(n-1)/2}$ &  $r^{ (n+1)/2}$& $n\geq 2$\\
$\PSU(n,r^f)$  &  $r^{fn(n-1)/2}$ & $r^{ (n+1)/2}$ & $n\geq 3$\\
$\PSp(n,r^f)$  & $r^{fn^2/4}$ &  $r^{ (n+1)/2}$& $n\geq 4$, even\\
$\POmega(n,r^f)$  & $r^{f(n-1)^2/4}$ &$r^{ (n+1)/2}$ &$n\geq 7$, $nr$ odd\\
$\POmega^\epsilon(n,r^f)$  & $r^{fn(n-2)/4}$&$r^{ (n+1)/2}$ &$n\geq 8$, $n$ even\\
$E_8(r^f)$ &$r^{120f}$ & $r^8$&\\
$E_7(r^f)$ & $r^{63f}$ & $r^6$&\\
$E_6(r^f)$&$r^{36f}$& $r^5$&\\
${}^2E_6(r^f)$ &$r^{36f}$& $r^5$&\\
$F_4(r^f)$ & $r^{24f}$& $r^5$ &\\
${}^2F_4(2^{2m+1})$  & $2^{12f}$ &  $2^5$ &$m\geq 1$\\
$G_2(r^f)$ & $r^{6f}$ & $r^2$, &$r$ odd\\
$G_2(r^f)$   & $r^{6f}$  & $r^3$, &$r=2$, $f\geq 2$\\
${}^2G_2(3^{2m+1})$  & $3^{3(2m+1)}$ &$3^2$ &$m\geq 1$\\
${}^2B_2(2^{2m+1})$ & $2^{2(2m+1)}$ & $2^2$&$m\geq 1$\\
${}^3D_4(r^f)$ & $r^{12f}$ & $r^3$ &\\
\hline
\end{tabular}
\caption{Orders and exponents of Sylow $r$-subgroups of simple groups of Lie type of characteristic $r$}
\label{tab:lietype}
\end{table}
\end{center}

Recall that $|T:T\cap \overline{C}|$  divides $3\cdot 2^tk$. In particular, a Sylow $r$-subgroup of $T$ must contain a cyclic subgroup of index at most $r$ if $r$ is odd and at most $16$ if $r=2$. Using this fact and Table~\ref{tab:lietype}, we deduce that $T$ is isomorphic to one of $\PSp(4,2)$, $\PSU(4,2)$, $\PSL(4,2)$, ${}^2B_2(8)$, $\PSU(3,r^f)$, or $\PSL(n,r^f)$ with $n\leqslant 3$.

It can be checked that $\PSU(4,2)$ and $\PSL(4,2)$ do not contain a cyclic subgroup of index dividing $3\cdot 2^tk$, whereas the case $T\cong\PSp(4,2)\cong\Sym(6)$ has already been dealt with. The group ${}^2B_2(8)$ has order coprime to $3$ but its automorphism group yields row (9) of Table~\ref{tab:ASposs}.

Suppose now that $T$ is isomorphic to $\PSL(3,r^f)$ or $\PSU(3,r^f)$. A Sylow $r$-subgroup of  $T$ has order $r^{3f}$ and exponent $2^2$ if $r=2$, and $r$ otherwise. It follows that $r=2$ and $f\leq 2$. It can be checked that no example arise when $f=2$, while $\PSU(3,2)$ is soluble. Finally, we will deal with $T\cong \PSL(3,2)\cong \PSL(2,7)$ as part of our next and last case.

It remains to deal with the case $T\cong\PSL(2,r^f)$. Since $\PSL(2,2)$ and $\PSL(2,3)$ are soluble, $\PSL(2,4)\cong\PSL(2,5)\cong\Alt(5)$ and $\PSL(2,9)\cong\Alt(6)$,  we may assume that $r^f\geq 7$ and $r^f\neq 9$. The Sylow $r$-subgroup of $T$ has order $r^f$ and exponent $r$. In particular, $f\leq 2$ unless $r=2$ in which case $f\leq 5$.

It can be checked that when $r=2$ and $f\in\{3,4,5\}$, the examples that arise are in rows $(10-14)$ of Table~\ref{tab:ASposs}. 

Suppose now that $f=1$. In particular, $r$ is odd and $\overline{G}=\PSL(2,r)$ or $\overline{G}=\PGL(2,r)$. The orders of maximal cyclic subgroups of $\PSL(2,r)$ are $(r+1)/2$, $(r-1)/2$ and $r$, while the orders of maximal cyclic subgroups of $\PGL(2,r)$ are $(r+1)$, $(r-1)$ and $r$ \cite{dickson}. Since $|\overline{C}|$ is even, we get rows (15) and (16) of Table~\ref{tab:ASposs}.

Finally, suppose that $f=2$ and $r\geq 5$. Since $k$ is squarefree and $r^2$ divides $|\PSL(2,r^2)|$, $r$ must divide $|\PSL(2,r^2)\cap\overline{C}|$. On the other hand, a Sylow $r$-subgroup $S$ of $\PSL(2,r^2)$ is elementary abelian hence $|\PSL(2,r^2)\cap\overline{C}|=r$. Moreover, for each element $c$ of order $r$ in $S$, the centraliser of $c$ in $\PGL(2,r^2)$ is $S$ \cite{dickson}. Since $|\overline{C}|$ is even, it follows that $\PSigmaL(2,r^2)\leqslant \overline{G}$ and $|\overline{C}|=2r$.  Note that $|\PSigmaL(2,r^2)|_2\geq 2^4$ and $|\PGammaL(2,r^2)|_2\geq 2^5$. This gives rows (17) and (18) of Table~\ref{tab:ASposs}.
\end{proof}

\section{Proof of Theorem~\ref{theorem:squarefree}}
In view of the statement of Theorem~\ref{theorem:squarefree}, we will consider the following hypothesis.
\begin{hypothesis}\label{hyphyp}\label{notnot}
Let $k\geq 5$ be a squarefree integer coprime to $6$ and let $\Gamma$ be a cubic $(G,t+1)$-arc-regular graph such that $C$ is semiregular with $k$ orbits.
\end{hypothesis}
Our goal is to show that $\Gamma$ has order at most $6k^2$. We introduce the following notation which we will use whenever we assume Hypothesis~\ref{hyphyp}.

\begin{notation*}
 For a prime $p$ dividing $|G|$, we denote by $P_p$ a Sylow $p$-subgroup of $G$ and by $C_p$ a Sylow $p$-subgroup of $C$ contained in $P_p$. (Note that we may have $C_p=1$.) Let $c$ be the unique involution in $C$. ($C$ has even order since $\Gamma$ does but $k$ is odd.) 

We denote by $S$ the soluble radical of $G$ and write $\overline{G}=G/S$ and $\overline{C}=CS/S$. Let $T$ be the socle of $\overline{G}$. 
\end{notation*}

 We first note a few obvious facts about $G$ and $C$ that will be very useful.

\begin{lemma}\label{basic-pty}
Assuming Hypothesis~\ref{hyphyp}, the following holds.
\begin{enumerate}
\item $|G|=3\cdot 2^tk|C|$.
\item  $|P_2:C_2|=2^t$.
\item For every odd prime $p$, we have that $|P_p:C_p|$ divides $p$.
\end{enumerate}
\end{lemma}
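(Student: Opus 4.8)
The plan is essentially a bookkeeping argument with group orders, so I do not expect any serious obstacle; the only points requiring a moment's care are the exact order of the vertex-stabiliser and the semiregularity of $C$.

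First I would establish (1). By Theorem~\ref{TutteTheorem}, the structure of $G_v$ is one of the five groups in Table~\ref{tab:stabs}, and in each case a direct inspection shows $|G_v|=3\cdot 2^t$ (namely $|\ZZ_3|=3$, $|\Sym(3)|=6$, $|\Sym(3)\times\ZZ_2|=12$, $|\Sym(4)|=24$, $|\Sym(4)\times\ZZ_2|=48$). Since $\Gamma$ is $G$-arc-transitive it is in particular $G$-vertex-transitive, so the orbit–stabiliser theorem gives $|G|=|G_v|\cdot|V\Gamma|=3\cdot 2^t|V\Gamma|$. On the other hand $C$ is semiregular with $k$ orbits on $V\Gamma$, hence it acts regularly on each orbit, so every orbit has size $|C|$ and $|V\Gamma|=k|C|$. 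Substituting yields $|G|=3\cdot 2^tk|C|$.

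Parts (2) and (3) then follow by extracting prime parts of the identity in (1), using that $k$ is squarefree and coprime to $6$. For (2), since $k$ is odd the $2$-part of (1) is $|G|_2=2^t|C|_2$; as $P_2$ is a Sylow $2$-subgroup of $G$ and $C_2\leq P_2$ a Sylow $2$-subgroup of $C$, this reads $|P_2|=2^t|C_2|$, i.e.\ $|P_2:C_2|=2^t$. For (3), fix an odd prime $p\mid|G|$ and take $p$-parts in (1): if $p=3$ then $k$ is coprime to $3$, so $|G|_3=3\,|C|_3$ and $|P_3:C_3|=3$; if $p>3$ then $p$ divides neither $3$ nor $2^t$, and squarefreeness of $k$ gives $k_p\in\{1,p\}$, so $|G|_p=k_p|C|_p$ and $|P_p:C_p|=k_p$ divides $p$. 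This completes the argument, the ``hard part'' being merely to remember that $k$ being coprime to $6$ is exactly what kills the contributions of the factors $3$ and $2^t$ at odd primes.
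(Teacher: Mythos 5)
Your proof is correct; the paper actually states this lemma without proof, treating it as a collection of obvious facts, and your argument (orbit--stabiliser with $|G_v|=3\cdot 2^t$ from Theorem~\ref{TutteTheorem}, $|V\Gamma|=k|C|$ from semiregularity, then extracting prime parts using that $k$ is squarefree and coprime to $6$) is exactly the intended justification.
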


\subsection{$G$ Soluble}
We first focus on the case when $G$ is soluble.

\begin{lemma}\label{lemma:yii}
Assume Hypothesis~\ref{hyphyp}. If $G$ is soluble, then $t\leq 1$ and, for every prime $p$, we have $|P_p:C_p|\leq p$.
\end{lemma}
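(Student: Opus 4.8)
Since $|P_p:C_p|$ divides $p$ for every odd prime $p$ by Lemma~\ref{basic-pty}, the statement ``$|P_p:C_p|\le p$ for all $p$'' is, granting $t\le 1$, nothing more than $2^t\le 2$; so the lemma is equivalent to showing that $t=2$ cannot occur. By Proposition~\ref{prop:yoyo} we have $t\le 2$, and if $t=2$ then $\Gamma$ is a regular cover of $\K_{3,3}$. So the plan is: assume $t=2$, fix a normal subgroup $N$ of $G$ that is semiregular with six orbits, with $\Gamma/N\cong\K_{3,3}$ and $G/N\cong\Aut(\K_{3,3})$ of order $72$, and derive a contradiction.

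First I would record the cheap consequences. From $6|N|=|V(\Gamma)|=k|C|$ and $\gcd(k,6)=1$ we get $6\mid|C|$ and $k\mid|N|$, so $|N|\ge 5$. Inside any $CN$-orbit the $C$-orbits form a block system, and so do the $N$-orbits; hence the number of $CN$-orbits divides $\gcd(k,6)=1$, i.e.\ $CN$ is transitive. As $\Aut(\K_{3,3})=\Sym(3)\wr\Sym(2)$ has no element of order larger than $6$, the cyclic group $CN/N$ has order at most $6$, so transitivity forces $|CN/N|=6$ and, crucially, $CN$ \emph{regular} on $V(\Gamma)$, with $CN/N\cong\ZZ_6$. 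Regularity of $CN$ is the key point, since then the number of $H$-orbits on $\Gamma$ equals $|CN:H|$ for any $H\le CN$.

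Now pass to a counterexample with $|V(\Gamma)|$ minimal and examine a minimal normal subgroup $M\cong\mathbb{F}_q^d$ of $G$. One rules out $M$ having at most two orbits: $G_v\cong\Sym(3)\times\ZZ_2$ has no elementary abelian normal subgroup besides copies of $\ZZ_2$ (excluded by connectedness) or $\ZZ_3$ (which makes $M$ locally transitive, hence of at most two orbits in a cubic graph), and in each surviving case $|M|\in\{k|C|,k|C|/2\}$ would make $|C|$ essentially a prime power, contradicting $6\mid|C|$. So $M$ is semiregular with at least three orbits and $\Gamma$ is a regular cover of $\Gamma/M$. If $q\ge 5$, then $\gcd(|M|,12)=1$, so $CM/M$ is semiregular on $\Gamma/M$ by Lemma~\ref{IstvanIstvan}, and its number of orbits is $|CN:CM|=k|C\cap M|/|M|$; this forces either $M\le C$ (so $\Gamma/M$ is a strictly smaller counterexample with the same $k$), or that number equal to $k/q\ge 5$ (again a smaller counterexample), or $k=q$, in which case $\Gamma/M$ is an arc-transitive cubic circulant covering $\K_{3,3}$, hence $\cong\K_{3,3}$, so $N=M$ is elementary abelian and $C\cap N\le N$ is cyclic of order $|C|/6$ — impossible, by comparing exponents and, for small $d$, by the fact that $H_1(\K_{3,3};\ZZ)$ is an irreducible $\ZZ[\Aut(\K_{3,3})]$-module that stays irreducible modulo every prime $\ge 5$, so that an $\mathbb{F}_q$-vector-space cover of $\K_{3,3}$ to which $\Aut(\K_{3,3})$ lifts has covering group $\mathbb{F}_q^4$ (or is trivial). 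Hence $q\in\{2,3\}$.

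Finally, using $O_2(\Aut(\K_{3,3}))=1$ and that $O_3(\Aut(\K_{3,3}))\cong\mathbb{F}_3^2$ is an irreducible $\mathbb{F}_3[\Aut(\K_{3,3})]$-module: if $M\le N$ (automatic when $q=2$), then the orbit count in the regular group $CN$ gives that $|M:C\cap M|$ divides $k$, and $\gcd(|M|,k)=1$ forces $M\le C$, so $M\cong\ZZ_q$; since $C_v=1$, $C/M$ is semiregular on $\Gamma/M$, a strictly smaller counterexample — contradiction. So $q=3$ and $MN/N$ is a nontrivial elementary abelian normal subgroup of $\Aut(\K_{3,3})$, hence equals $O_3(\Aut(\K_{3,3}))$, and as $M$ is irreducible, $M\cong\mathbb{F}_3^2$ with $M\cap N=1$. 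Two distinct such subgroups would generate one lying inside $N$, so $G$ has a unique minimal normal subgroup $M_0\cong\mathbb{F}_3^2$; then $F(G)=O_3(G)=M_0$ (using $O_3(G)\cap N=1$) and $C_G(M_0)=M_0$, whence $G/M_0$ embeds in $\GL(2,3)$. But $|G|=12k|C|$, so $|G/M_0|=4k|C|/3\ge 40$ must divide $|\GL(2,3)|=48$, hence equal $48$, giving $k|C|=36$ — incompatible with $k\ge 5$ coprime to $6$ and $6\mid|C|$. I expect the real work to be exactly the bookkeeping in the cases $q\in\{2,3\}$ together with the residual elementary-abelian-$\mathbb{F}_q$-cover case, where the precise module structure of $\Aut(\K_{3,3})$ must be invoked.
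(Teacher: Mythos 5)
Your reduction to ruling out $t=2$ is the same first step as the paper's (via Proposition~\ref{prop:yoyo}), but from there you take a genuinely different route. The paper quotes the Feng--Kwak classification of $s$-regular elementary abelian covers of $\K_{3,3}$, intersects it with the Sylow constraint from Lemma~\ref{basic-pty} (every elementary abelian section of $P_q$ has rank at most $2$ for $q$ odd, at most $3$ for $q=2$), and is left only with the Pappus graph, which is killed by inspecting its Sylow $3$-subgroup. You instead run a minimal-counterexample induction on $|V(\Gamma)|$ through minimal normal subgroups, which buys independence from the Feng--Kwak tables for $q\in\{2,3\}$ (in particular you never need to know the Pappus graph exists), at the cost of a much longer case analysis. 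Your skeleton is sound: the regularity of $CN$, the reduction ``$M\le C$ gives a smaller counterexample'', the $q=3$, $M\not\le N$ case via $F(G)=M_0$ and $G/M_0\hookrightarrow\GL(2,3)$ all check out.

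There are, however, two points that are genuine gaps as written. First, the deduction ``the $C$-orbits and $N$-orbits form block systems inside each $CN$-orbit, hence the number of $CN$-orbits divides $\gcd(k,6)$'' is not valid: $CN$-orbits need not all contain the same number of $C$-orbits, so no divisibility of the total counts follows. The conclusion is true, but for a different reason: the number of $C$-orbits of $\Gamma$ lying over a $C$-orbit $O$ of $\K_{3,3}$ is $|O|\,|N|/|C|=|O|k/6$, and this being a positive integer with $\gcd(k,6)=1$ forces $6\mid|O|$, hence $|O|=6$. Second, and more seriously, the residual case $q=k\ge 5$, $N=M\cong\ZZ_q^d$ with $d\le 2$ is carried entirely by your assertion that $H_1(\K_{3,3};\mathbb{F}_q)$ is an irreducible $\Aut(\K_{3,3})$-module for every $q\ge 5$, together with the (uncited) correspondence between $\ZZ_q^d$-covers admitting lifts and invariant quotients of $H_1$. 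That assertion is true (the module is $V_2\otimes V_2$ for $\Sym(3)\wr\ZZ_2$, irreducible in characteristic coprime to $72$), but it is exactly the content of the part of \cite[Theorem~4.1]{FengKwak} that the paper cites, so as it stands your proof has simply relocated the hard input into an unproved claim rather than eliminated it. If you either prove the irreducibility or cite the covering classification at that point, and repair the block-system step, the argument goes through.
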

\begin{proof}
By Lemma~\ref{basic-pty}, it suffices to show that $t\leq 1$. Suppose that $t\geq 2$. By Proposition~\ref{prop:yoyo}, $t=2$ and $\Gamma$ is a regular cover of  $\K_{3,3}$.

Since $G$ is soluble, $\Gamma$ is a regular cover of $\Gamma^*$ which is itself a regular $\ZZ_q^a$-cover of  $\K_{3,3}$ for some prime $q$ and some integer $a\geq 1$. Since $t=2$, it follows by \cite[Theorem~1.1]{FengKwak2} and \cite[Theorem~4.1]{FengKwak} that $a\geq 4$, or $q=3$ and $a\neq 2$. 

By Lemma~\ref{basic-pty}, $P_q$ has a cyclic subgroup of index dividing $q$ or $4$. In particular, every elementary abelian section of $P_q$ has rank at most $2$, unless $q=2$, in which case it has rank at most $3$. By the previous paragraph, we get that $q=3$ and $a=1$ and, by \cite[Theorem~1.1]{FengKwak2}, $\Gamma^*$ is isomorphic to the Pappus graph. Since $t=2$ and the Pappus graph is $3$-arc-regular, $\Aut(\Gamma^*)$ is a quotient of $G$. This is  a contradiction because the Sylow $3$-subgroup of $\Aut(\Gamma^*)$ does not have a cyclic maximal subgroup.
\end{proof}

\begin{lemma}\label{IstvanLemma2}
Assume Hypothesis~\ref{hyphyp}, and let $p\geq 5$ be a prime dividing the order of $\Gamma$. If $P_p$ is normal in $G$ then 
\begin{enumerate}
\item $c$ does not centralise $P_p$, and \label{firstfirst}
\item $|P_p:C_p|=p$ and $|C_p|\leq p$. \label{secondsecond}
\end{enumerate}
\end{lemma}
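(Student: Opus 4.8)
The plan is to reduce both statements to the following auxiliary observation, which is the substance of~\eqref{firstfirst}: \emph{if $\Delta$ is a cubic $H$-arc-transitive graph, $q\geq 5$ is a prime dividing $|V(\Delta)|$, $Q$ is a normal Sylow $q$-subgroup of $H$, and $D\leq H$ is semiregular cyclic with an odd number of orbits, then the unique involution $d$ of $D$ does not centralise $Q$.} Suppose it did. Since $Q\norml H$, a one-line calculation shows every $H$-conjugate of $d$ centralises $Q$, so $\langle d^H\rangle\leq C_H(Q)$. By Lemma~\ref{IstvanLemma}, $d$ reverses an edge of $\Delta$, so Lemma~\ref{lem:edge-rev} makes the normal subgroup $C_H(Q)$ vertex-transitive; hence $|H:C_H(Q)|$ divides $|H_v|$, which is a $\{2,3\}$-number by Theorem~\ref{TutteTheorem} and so is coprime to $q$. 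Thus some Sylow $q$-subgroup of $H$ lies in $C_H(Q)$, and as $C_H(Q)\norml H$ this forces $Q\leq C_H(Q)$, so $Q\leq Z(C_H(Q))$. By Burnside's normal $q$-complement theorem, $C_H(Q)=Q\times L$ with $L=O_{q'}(C_H(Q))$ characteristic in $C_H(Q)$ and hence normal in $H$. As $|d|=2$ and $q$ is odd, $d\in L$; but then Lemma~\ref{lem:edge-rev} forces the normal subgroup $L$ to be vertex-transitive, so $|V(\Delta)|$ divides the $q'$-number $|L|$, contradicting $q\mid|V(\Delta)|$.

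Part~\eqref{firstfirst} is then just the observation applied to $\Gamma$, $G$, $P_p$ (a normal $p$-group, hence the unique Sylow $p$-subgroup), $C$ and $c$. For part~\eqref{secondsecond}, Lemma~\ref{basic-pty} gives $|P_p:C_p|\in\{1,p\}$; if $|P_p:C_p|=1$ then $P_p=C_p\leq C$ and the abelian group $C$ has $c$ centralising $P_p$, contradicting~\eqref{firstfirst}, so $|P_p:C_p|=p$. Suppose for contradiction that $|C_p|\geq p^2$. Then $C_p$ is a cyclic subgroup of index $p$ in the $p$-group $P_p$ (with $p$ odd), and $c$ induces an automorphism of $P_p$ of order exactly $2$ (it is an involution, and nontrivial by~\eqref{firstfirst}) centralising $C_p$. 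Let $X\leq P_p$ be generated by the elements of order $p$; it is characteristic, hence $X\norml G$. If $X=P_p$, then Lemma~\ref{pgroups}\eqref{pgroup1} says $P_p$ is elementary abelian, impossible as $C_p$ has an element of order $p^2$. If $X$ is cyclic, then Lemma~\ref{pgroups}\eqref{pgroup2} makes $P_p$ cyclic, so $C_p$ is its unique maximal subgroup and Lemma~\ref{pgroups}\eqref{pgroup3} forces $c$ to centralise $P_p$, again contradicting~\eqref{firstfirst}.

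This leaves the case where $X$ is noncyclic and $X\neq P_p$. By the classification of $p$-groups with a cyclic maximal subgroup, $X\cong\ZZ_p\times\ZZ_p$; since $X\not\leq C_p$ and $C_p$ is maximal, $P_p=C_pX$ and $|C_p\cap X|=p$, so $\overline{P_p}:=P_pX/X=C_pX/X$ is cyclic of order $|C_p|/p\geq p$. As $|X|=p^2$ is coprime to $|G_v|$ and $X$ has at least three orbits (because $|V(\Gamma)|=k|C|\geq 5p^2>2p^2$), $\Gamma$ is a regular cover of $\Delta:=\Gamma/X$, which is cubic and $(G/X)$-arc-transitive; here $\overline{P_p}$ is a normal Sylow $p$-subgroup of $G/X$, the image $\overline{C}$ of $C$ is semiregular with an odd number of orbits by Lemma~\ref{IstvanIstvan}, its involution is the image $\overline{c}$ of $c$, and $p$ divides $|V(\Delta)|$. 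But $\overline{P_p}=\overline{C_p}\leq\overline{C}$ is abelian, so $\overline{c}$ centralises $\overline{P_p}$, contradicting the auxiliary observation. Hence $|C_p|\leq p$, as required.

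The step I expect to take the most care is this final case: one must verify that passing to $\Gamma/X$ genuinely yields a cover (so that the vertex stabiliser, the semiregularity of the image of $C$, and the divisibility $p\mid|V(\Gamma/X)|$ are all preserved), and one must rule out the small or degenerate configurations so that the reduction to the auxiliary observation is legitimate. The $p$-group bookkeeping and the Burnside argument are routine by comparison.
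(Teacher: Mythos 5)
Your proof is correct and follows essentially the same route as the paper's: part~\eqref{firstfirst} by extracting a normal $p'$-subgroup of $C_G(P_p)$ containing $c$ and invoking Lemmas~\ref{IstvanLemma} and~\ref{lem:edge-rev} (the paper gets the complement via Schur--Zassenhaus where you use Burnside, a cosmetic difference), and part~\eqref{secondsecond} via the same trichotomy on the subgroup $X$ generated by the elements of order $p$, using Lemma~\ref{pgroups} and passing to $\Gamma/X$ in the last case to reapply part~\eqref{firstfirst}. Your self-contained ``auxiliary observation'' is a mild tidying of the paper's re-application of part~\eqref{firstfirst} to the quotient, which the paper instead justifies by transferring Hypothesis~\ref{hyphyp} via Lemma~\ref{IstvanIstvan}.
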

\begin{proof}
We first prove~\eqref{firstfirst}. Suppose, by contradiction, that $c$ centralises $P_p$. Let $Z$ be the centraliser of $P_p$ in $G$. This is a normal subgroup of $G$.  By the Schur-Zassenhaus Theorem, we can write $Z=\Z(P_p) \times Y$ where $Y$ is a $p'$-group. Note that $Y$ is characteristic in $Z$ and thus normal in $G$. Since $p$ is odd, we have $c\in Y$. By Lemmas~\ref{IstvanLemma} and~\ref{lem:edge-rev}, it follows that $Y$ is transitive on the vertices of $\Gamma$, a contradiction, as $p$ divides the order of $\Gamma$. This concludes the proof of~\eqref{firstfirst}.

We now prove~\eqref{secondsecond}. By~(\ref{firstfirst}), $P_p\nleq C$ and thus Lemma~\ref{basic-pty} implies $|P_p:C_p|=p$. In particular, $P_p$ is a $p$-group with a cyclic maximal subgroup. 

Let $X$ be the group generated by elements of order $p$ in $P_p$. This is a characteristic subgroup of $P_p$ and thus normal in $G$. Suppose first that $X=P_p$. The result then follows by Lemma~\ref{pgroups}(\ref{pgroup1}).

Suppose next that $C_pX< P_p$. Since $C_p$ is maximal in $P_p$, this implies that $X\leq C_p$. It follows by Lemma~\ref{pgroups}(\ref{pgroup2}) that $P_p$ is cyclic. By~(\ref{firstfirst}), $c$ centralises $C_p$ but not $P_p$ and thus $|P_p|=p$ by Lemma~\ref{pgroups}(\ref{pgroup3}).

From now, we assume that $X<P_p=C_pX$. This implies that $1\neq P_p/X\leq CX/X$ and thus $P_p/X$ is a non-trivial normal Sylow $p$-subgroup of $G/X$. By Lemma~\ref{IstvanIstvan}, Hypothesis~\ref{hyphyp} is satisfied with $(k,\Gamma,G,C)$ replaced by $(k',\Gamma/X,G/X,CX/X)$ for some divisor $k'$ of $k$. In particular, $p$ divides the order of $\Gamma/X$ and we may apply~(\ref{firstfirst}) to conclude that $cX$ does not centralise $P_p/X$, contradicting the fact that $P_p/X\leq CX/X$.
\end{proof}

\begin{theorem}
Assume Hypothesis~\ref{hyphyp}. If $G$ is soluble, then $\Gamma$ has order at most $6k^2$.
\end{theorem}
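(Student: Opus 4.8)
Since Lemma~\ref{basic-pty} gives $|\Gamma|=|G|/|G_v|=k|C|$, it suffices to show $|C|\le 6k$, and the plan is to prove this by induction on $|\Gamma|$. By Lemma~\ref{lemma:yii} we already have $t\le 1$ (so $|G_v|\in\{3,6\}$) and every Sylow subgroup of $G$ has a cyclic subgroup of index at most its defining prime; in particular every elementary abelian section of $G$ has rank at most $2$, and $O_2(G)$, $O_3(G)$ have a cyclic subgroup of index at most $2$, resp.\ $3$.

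The first step is to produce a nontrivial normal $\{2,3\}'$-subgroup of $G$. If there were none, then $O_p(G)=1$ for every prime $p\ge 5$, so the Fitting subgroup would be $F(G)=O_2(G)\times O_3(G)$, a $\{2,3\}$-group. By the classification of $p$-groups with a cyclic maximal subgroup, $O_2(G)$ and $O_3(G)$ each belong to a short list (cyclic, $\ZZ_2^2$ or $\ZZ_3^2$, dihedral, generalised quaternion, semidihedral, or modular), and one checks that every group on these lists has automorphism group a $\{2,3\}$-group. Since $G$ is soluble, $C_G(F(G))\le F(G)$, so $|G|$ divides $|F(G)|\,|\Aut(F(G))|$; hence $G$---and therefore $\Gamma$---would have order divisible by no prime $\ge 5$, contradicting that $k\ge 5$ is coprime to $6$ and divides $|\Gamma|=k|C|$. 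So let $N$ be a minimal normal $\{2,3\}'$-subgroup of $G$; as $G$ is soluble, $N\cong\ZZ_q^a$ with $q\ge 5$ prime, and $a\le 2$ by the rank bound. Since $|N|=q^a$ is coprime to $|G_v|$, $N$ is semiregular, and since $|N|$ is odd while $|\Gamma|$ is even, $N$ is not vertex-transitive.

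Next I split on the number of $N$-orbits. If $N$ has exactly two orbits, then $\Gamma$ is bipartite and $|\Gamma|=2q^a$; since the odd squarefree integer $k\ge 5$, coprime to $3$, divides $|\Gamma|$, we get $k=q$ and hence $|C|=|\Gamma|/k=2q^{a-1}\le 2k$. If $N$ has at least three orbits, then $\Gamma$ is a regular $N$-cover of the cubic $(G/N,t+1)$-arc-regular graph $\Gamma/N$, and by Lemma~\ref{IstvanIstvan} the group $CN/N$ is semiregular on $V(\Gamma/N)$ with $k'$ orbits for a divisor $k'$ of $k$, where $k/k'=q^a/|C\cap N|$ and $|C\cap N|\in\{1,q\}$. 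If $k'=1$ then $\Gamma/N$ is a cubic arc-transitive circulant, hence $\cong\K_4$ or $\K_{3,3}$, so $|\Gamma|\in\{4q^a,6q^a\}$, which again forces $k=q$ and $|\Gamma|\le 6q^2=6k^2$. Otherwise $k'\ge 5$ and the inductive hypothesis gives $|\Gamma/N|\le 6(k')^2$, so $|\Gamma|=q^a|\Gamma/N|\le 6\,k^2|C\cap N|^2/q^a$; this is at most $6k^2$ whenever $|C\cap N|^2\le q^a$, which holds in all cases except $a=1$ and $|C\cap N|=q$, i.e.\ $N\cong\ZZ_q\le C$.

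This last case---$N\cong\ZZ_q$ a minimal normal subgroup of $G$ contained in $C$---is the technical heart of the proof, since quotienting by $N$ loses a factor of $q$; one must instead bound $|C_q|$ directly, the target being $|C_q|\le q$. I would analyse $O_q(G)$, which contains $N$ and, by Lemma~\ref{lemma:yii}, has a cyclic subgroup of index at most $q$, hence is cyclic, $\ZZ_q^2$, or of modular type. When one can arrange that the Sylow $q$-subgroup $P_q$ is normal in $G$---directly, or after reducing modulo $\Phi(O_q(G))$ or modulo a non-cyclic $O_q(G)$ via Lemma~\ref{IstvanIstvan}---Lemma~\ref{IstvanLemma2} applies and yields $|C_q|\le q$ together with $|P_q:C_q|=q$; combining this with the inductive bound applied to the regular cover $\Gamma\to\Gamma/P_q$ (for which Lemma~\ref{IstvanIstvan} forces the number of orbits to drop to $k/q$) gives $|\Gamma|=|P_q|\,|\Gamma/P_q|\le q\cdot q\cdot 6(k/q)^2=6k^2$, with the low-orbit and circulant subcases treated as before. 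The main obstacle is therefore to show that, in this configuration, the soluble structure of $G$ forces $P_q$ to be normal (equivalently, that $|C_q|\le q$); I expect this to be handled by a careful argument repeatedly invoking Lemmas~\ref{IstvanLemma2} and~\ref{IstvanIstvan} on suitable quotients of $G$, after which $|C|\le 6k$ and hence $|\Gamma|=k|C|\le 6k^2$ follow.
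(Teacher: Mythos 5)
Your reduction is sound up to the point you yourself flag: the base step (producing a nontrivial normal $\{2,3\}'$-subgroup via $C_G(F(G))\le F(G)$ and the rank bound on $F(G)$), the induction on $|\Gamma|$ through a minimal normal $N\cong\ZZ_q^a$ with $a\le 2$, and the bookkeeping $|\Gamma|\le 6k^2|C\cap N|^2/q^a$ in the quotient step are all correct. But the case $a=1$ with $N\le C$ --- which you correctly identify as the only one where the induction loses a factor of $q$ --- is left as a hope rather than a proof. The claim you would need, namely that $P_q$ is normal in $G$ (or becomes normal after passing to explicitly identified quotients) so that Lemma~\ref{IstvanLemma2} applies and gives $|C_q|\le q$, is exactly the hard point, and nothing in your argument produces it: $P_q$ need not be normal in $G$ at all, and to know that it becomes a normal Sylow subgroup of some admissible quotient one needs a structural input that your repeated use of Lemmas~\ref{IstvanLemma2} and~\ref{IstvanIstvan} does not supply. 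As written, the proposal therefore has a genuine gap.

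This is precisely where the paper takes a different route. Since Lemma~\ref{lemma:yii} makes every Sylow subgroup of $G$ metacyclic, the Chillag--Soon theorem applies and gives $G=N\rtimes A$ with $A$ a Hall $\{2,3\}$-subgroup and $N$ carrying a Sylow tower $1=N_0\norml N_1\norml\cdots\norml N_n=N$ with $N_{i+1}/N_i\cong P_{p_i}$. Each $P_{p_i}$ is then a normal Sylow subgroup of $G/N_i$, Hypothesis~\ref{hyphyp} passes to $G/N_i$ by Lemma~\ref{IstvanIstvan}, and Lemma~\ref{IstvanLemma2} yields $|C_{p_i}|\le k_{p_i}$ for every prime $p_i\ge 5$ simultaneously; combined with $|C_2||C_3|\le 6$ (because $\Gamma/N$ is a cubic arc-transitive circulant) this gives $|C|\le 6k$ directly, with no induction. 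If you want to complete your argument, you should import this Sylow-tower statement (or an equivalent one) to settle the case $N\cong\ZZ_q\le C$; without it the proof is incomplete.
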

\begin{proof}
By Lemma~\ref{lemma:yii}, every Sylow $p$-subgroup of $G$ is metacyclic. It follows by~\cite[Theorem~1]{Chillag} that $G=N\rtimes A$, where $A$ is a Hall $\{2,3\}$-subgroup of $G$ and $N$ has a normal series
$$1=N_0\norml N_1\norml \cdots\norml N_n=N$$ 
where $N_{i+1}/N_i\cong P_{p_i}$. For every $i\in\{0,\ldots,n\}$, $|N_i|$ is coprime to $6$ and thus semiregular. In particular,  $\Gamma$ is a regular cover of $\Gamma/N_i$ and, by Lemma~\ref{IstvanIstvan}, $CN_i/N_i$ is semiregular and has $\kappa_i$ orbits on $\Gamma/N_i$ for some divisor $\kappa_i$ of $k$. It follows that $(\Gamma/N_i,G/N_i)$ satisfies Hypothesis~\ref{hyphyp} with $(k,\Gamma,G,t,C)$ replaced by $(\kappa_i,\Gamma/N_i,G/N_i,t,CN_i/N_i)$. Note that $N_{i+1}/N_i$ is a normal Sylow $p_i$-subgroup of $G/N_i$ and we may thus apply Lemma~\ref{IstvanLemma2} to conclude that $|C_{p_i}|\leq |P_{p_i}:C_{p_i}|=k_{p_i}$. Finally, $k$ is coprime to $6$ but $G/N_n=G/N\cong A$ is a $\{2,3\}$-group and thus $\kappa_n=1$. Hence $\Gamma/N$ is a cubic arc-transitive circulant and thus has order at most $6$. It follows that $|C_2||C_3|\leq 6$ hence $|C|\leq 6k$, which concludes the proof.
\end{proof}

\subsection{$G$ not soluble}

We now consider the remaining case, namely when $G$ is not soluble.

\begin{lemma}\label{lem:boundS2}
Assume Hypothesis~\ref{hyphyp}. If $G$ is insoluble, then $\overline{G}$, $|\overline{C}|$,  $\log_2|\overline{G}:\overline{C}|_2$ and upper bounds for $t$ and $\log_2|S|_2$ are as in rows $(1-8)$ or $(15-18)$ of Table~\ref{tab:ASposs}.
\end{lemma}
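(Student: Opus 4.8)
The plan is to combine the structural results already assembled, principally Lemma~\ref{thm:quosolradical} and Proposition~\ref{allsimples}, to pin down $\overline{G}$. First I would record what Hypothesis~\ref{hyphyp} and Lemma~\ref{basic-pty} give us: $C$ is semiregular with an odd number $k$ of orbits, $|G|=3\cdot 2^tk|C|$, and $|P_2:C_2|=2^t$. Since $G$ is insoluble, part~\eqref{quoradical1} of Lemma~\ref{thm:quosolradical} applies with the semiregular cyclic subgroup $C$: we get that $|C\cap S|$ is odd, hence the unique involution $c$ of $C$ lies outside $S$, so $\overline{C}=CS/S$ has even order; moreover $|\overline{G}:\overline{C}|_2\,|S|_2=2^t$, so in particular $|\overline{G}:\overline{C}|_2$ divides $2^t$ and so divides $2^4$ by Theorem~\ref{TutteTheorem}. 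Also, since $C$ has index $3\cdot 2^tk|C|/|C|$ in $G$ modulo the kernel $S$, the image $\overline{C}$ is a cyclic subgroup of $\overline{G}$ of index dividing $3\cdot 2^tk$, and $3$ divides $|\overline{G}|$ because $3$ divides $|G_v|=|(G/S)_{v^S}|$ (using that $S$ is semiregular, from Lemma~\ref{InsolubleQuotient}).

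Next I would invoke part~\eqref{quoradical2} of Lemma~\ref{thm:quosolradical}: a Sylow $2$-subgroup $P_2$ of $G$ has the cyclic subgroup $C_2$ of index $2^t\le 2^t$, so $\overline{G}=G/S$ is almost simple. Now Proposition~\ref{allsimples} applies directly — with the integer $t$ satisfying $0\le t\le 4$, with the same squarefree $k$ coprime to $6$, with $\overline{G}$ almost simple of order divisible by $3$, and with $\overline{C}$ a cyclic subgroup of $\overline{G}$ of even order and index dividing $3\cdot 2^tk$ — giving that $\overline{G}$, $|\overline{C}|$ and $\log_2|\overline{G}:\overline{C}|_2$ appear in Table~\ref{tab:ASposs}, together with the stated upper bounds on $t$ and on $\log_2|S|_2$ (the latter coming from the identity $|\overline{G}:\overline{C}|_2\,|S|_2=2^t$ combined with the column giving $\log_2|\overline{G}:\overline{C}|_2$ and the column bounding $t$).

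It remains to rule out rows $(9)$--$(14)$ of Table~\ref{tab:ASposs}, i.e.\ the cases where the socle $T$ of $\overline{G}$ is $\PSL(2,2^f)$ with $f\in\{3,4,5\}$ or ${}^2B_2(8)$. These are exactly the rows whose ``Upper bound on $\log_2|S|_2$'' entry is ``$-$'', and the point is that in each of these rows $\log_2|\overline{G}:\overline{C}|_2$ is at least $3$ while the table's upper bound on $t$ is at most $2$; since $|\overline{G}:\overline{C}|_2$ divides $2^t$ by the displayed identity from Lemma~\ref{thm:quosolradical}\eqref{quoradical1}, we would need $t\ge 3$, contradicting $t\le 2$. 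Hence none of rows $(9)$--$(14)$ can occur, and $\overline{G}$ is as in rows $(1)$--$(8)$ or $(15)$--$(18)$, as claimed.

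The main obstacle I anticipate is bookkeeping rather than conceptual: making sure that the hypotheses of Proposition~\ref{allsimples} are verified cleanly (especially that $3\mid|\overline{G}|$ and that the index $|\overline{G}:\overline{C}|$ really divides $3\cdot 2^tk$ and not merely $|G:C|$), and checking carefully that the elimination of the characteristic-$2$ rows $(9)$--$(14)$ uses only the already-tabulated $2$-parts and $t$-bounds, with no further group theory needed. Everything else is a direct citation of the preceding lemmas.
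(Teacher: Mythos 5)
Your overall architecture matches the paper's: cite Lemma~\ref{InsolubleQuotient} to get that $\Gamma/S$ is cubic $(\overline{G},t+1)$-arc-regular (hence $3$ divides $|\overline{G}|$), use Lemma~\ref{thm:quosolradical} to get that $|C\cap S|$ is odd, $\overline{C}$ is cyclic of even order with $|\overline{G}:\overline{C}|$ dividing $3\cdot 2^tk$ and $|\overline{G}:\overline{C}|_2|S|_2=2^t$, and that $\overline{G}$ is almost simple, then feed this into Proposition~\ref{allsimples}. The derivation of the $\log_2|S|_2$ column from the identity $|\overline{G}:\overline{C}|_2|S|_2=2^t$ is also exactly what the paper does.

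However, there is a genuine gap: you attribute the ``Upper bound on $t$'' column to Proposition~\ref{allsimples}, but that proposition is a purely group-theoretic statement whose conclusion covers only $\overline{G}$, $|\overline{C}|$ and $\log_2|\overline{G}:\overline{C}|_2$; it cannot bound $t$, since in its hypotheses $t$ is just an arbitrary integer in $[0,4]$. The $t$-bounds are part of what Lemma~\ref{lem:boundS2} asserts and must be proved here. The paper does this by observing that, since $\Gamma/S$ is a regular cover, $G_v\cong(G/S)_{v^S}\leq\overline{G}$, so by Theorem~\ref{TutteTheorem} the group $\overline{G}$ must contain a subgroup isomorphic to the vertex-stabiliser determined by $t$; one then checks case by case which of $\ZZ_3$, $\Sym(3)$, $\Sym(3)\times\ZZ_2$, $\Sym(4)$, $\Sym(4)\times\ZZ_2$ embed in each candidate (e.g.\ $\Alt(7)$ has no $\Sym(4)\times\ZZ_2$, and for the $\PSL(2,r)$ and $\PGL(2,r)$ families one appeals to Dickson's classification of subgroups). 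This is the only substantive group theory in the proof, and your argument omits it. The omission propagates: your elimination of rows $(9)$--$(14)$ rests on the ``table's upper bound on $t$'' being at most $2$ in those rows, which is precisely the unproved claim (a priori one would have to rule out $G_v\cong\Sym(4)$ or $\Sym(4)\times\ZZ_2$ inside, say, $\PGammaL(2,16)$), and the $\log_2|S|_2$ column likewise depends on it. Once the $t$-bounds are supplied, the rest of your argument, including the mechanism ``$\log_2|\overline{G}:\overline{C}|_2>t$ gives a contradiction'' for rows $(9)$--$(14)$, coincides with the paper's.
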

\begin{proof}
By Lemma \ref{InsolubleQuotient}, $\Gamma/S$ is a cubic  $(\overline{G},t+1)$-arc-regular graph. In particular, $3$ divides $|\overline{G}|$. By Lemma~\ref{thm:quosolradical}, $|C\cap S|$ is odd, $\overline{C}$ is a cyclic group of even order and $\overline{G}$ is an almost simple group. Recall that $|G:C|=3\cdot 2^tk$. By Proposition~\ref{allsimples}, $\overline{G}$, $|\overline{C}|$ and $\log_2|\overline{G}:\overline{S}|$ are as in one of the rows of Table~\ref{tab:ASposs}.

We now compute upper bounds on $t$ and record them in Table~\ref{tab:ASposs}. We do this by using the fact that the isomorphism type of the vertex-stabiliser $\overline{G}_{v^S}$ is uniquely determined by $t$ (see Theorem~\ref{TutteTheorem}). For example, $\Alt(7)$ does not contain a subgroup isomorphic to $\Sym(4)\times \ZZ_2$ and thus $t\leq 3$ when $\overline{G}\cong\Alt(7)$. The fact that $\PSL(2,r)$ does not contain a subgroup isomorphic to $\Sym(4)\times \Sym(2)$ follows from Dickson's classification of the subgroups of $\PSL(2,r)$ \cite{dickson}.

%Moreover, since $\Gamma/S$ is cubic, $|\overline{G}:\overline{G}_{v^S}|$ is even. This allows us to additionally eliminate the case when $\overline{G}\cong\Sym(7)$ and $t=4$. 

We then combine this upper bound on $t$ with Lemma~\ref{thm:quosolradical}(\ref{quoradical1}) to obtain an upper bound on $\log_2|S|_2$, which we also  record in Table~\ref{tab:ASposs}. (When $\log_2|\overline{G}:\overline{C}|_2 > t$, we obtain a contradiction and record this as a --.)
\end{proof}

\begin{theorem}
Assume Hypothesis~\ref{hyphyp}. If $G$ is insoluble, then $\Gamma$ has order at most $6k^2$.
\end{theorem}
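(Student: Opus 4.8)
The plan is to bootstrap from Lemma~\ref{lem:boundS2}, which restricts $\overline{G}=G/S$ to one of the rows $(1\text{-}8)$ or $(15\text{-}18)$ of Table~\ref{tab:ASposs}, together with the recorded upper bounds on $t$ and on $\log_2|S|_2$. Writing $n$ for the order of $\Gamma$, we have $n=|G:G_v|=|G|/(3\cdot 2^t)$ and $|G|=3\cdot 2^t k|C|$ by Lemma~\ref{basic-pty}, so $n=k|C|$; hence it is enough to show $|C|\leq 6k$. Now $|C|=|C\cap S|\cdot|\overline{C}|$, and $|\overline{C}|$ is a bounded constant read off from the relevant row of Table~\ref{tab:ASposs} (always at most $12$). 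So the real task reduces to bounding $|C\cap S|$, and since $C\cap S$ is a cyclic subgroup of the soluble group $S$ with the quotient $CS/S=\overline{C}$ of bounded order, one expects $|C\cap S|$ to be controlled by the soluble analysis already carried out.

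The key steps, in order, are as follows. First, observe that $S$ acts on $\Gamma$ with $S_v$ trivial (since $|S|$ is coprime to $|G_v|$: the prime $3$ is handled because $S\cap G_v\trianglelefteq G_v$ and a normal $3$-subgroup would force $\Gamma$ to be a cover of something smaller, and the $2$-part of $|S|$ is bounded by Table~\ref{tab:ASposs} in a way compatible with $S_v=1$ — more carefully, $S$ is semiregular by Lemma~\ref{InsolubleQuotient} applied with $N=S$, since $\overline{G}$ is insoluble). Hence $S$ is a semiregular soluble group of automorphisms of $\Gamma$ and $C\cap S$ is a cyclic semiregular subgroup of it. Second, apply Lemma~\ref{IstvanIstvan} to the transitive group $G$ with normal subgroup $S$: since $|S|$ is coprime to $|G_v|$, the induced action on the $S$-orbits is the relevant quotient, and $\Gamma/S$ is a cubic $(\overline{G},t+1)$-arc-regular graph on $n/|S|$ vertices with $\overline{C}$ semiregular. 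Third — and this is the crux — bound $|C\cap S|$ by running the soluble machinery of the previous subsection on the action of $S$ (or of $C\langle S\rangle$) on $\Gamma$: the Sylow subgroups of $S$ are metacyclic by Lemma~\ref{basic-pty} (this property is inherited by the normal subgroup $S$), so the structure theorem of Chillag-Herzog applies, and the argument of the soluble case shows that the cyclic part contributes at most the ``diagonal'' factor, giving $|C_{p}\cap S|\leq k_p$ for each odd prime $p\geq 5$ and small bounded contributions at $2$ and $3$. Finally, multiply: $|C\cap S|\leq 6\cdot\prod_{p\geq 5}k_p = 6k_{\{2,3\}'}$, which divides $6k$; combined with the bounded $|\overline{C}|$ one still needs to check the product $|C\cap S|\cdot|\overline{C}|\leq 6k$, using that the odd part of $|\overline{C}|$ (a divisor of $3\cdot 2^t k$) already consumes any prime-to-$6$ factors, so there is no double counting. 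Row by row through Table~\ref{tab:ASposs} one then verifies $|C|\leq 6k$, hence $n=k|C|\leq 6k^2$.

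I expect the main obstacle to be the third step: transferring the soluble-case bound from ``$G$ soluble'' to ``the soluble radical $S$ of an insoluble $G$''. The earlier soluble theorem was stated for the whole group $G$ satisfying Hypothesis~\ref{hyphyp}, and here $S$ by itself need not be vertex-transitive or arc-transitive, so one cannot simply quote it — one must instead re-run its internals (Lemma~\ref{IstvanLemma2} on normal Sylow $p$-subgroups, the regular-cover reductions $\Gamma\to\Gamma/N_i$, and the final collapse to a cubic arc-transitive circulant) in this relative setting, being careful that at each stage the quotient still satisfies a version of Hypothesis~\ref{hyphyp} and that the involution $c$ still behaves as required. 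A secondary subtlety is bookkeeping the interplay between $|C\cap S|$ and $|\overline{C}|$ so that their product is genuinely at most $6k$ rather than merely $6k\cdot(\text{small constant})$; this likely requires noting that the prime-to-$6$ part of $|\overline{C}|$ must divide $k/(\text{prime-to-}6\text{ part of }|C\cap S|)$ because $k$ is squarefree, so the relevant primes cannot appear on both sides. Once these two points are pinned down, the remaining verification is a finite, essentially mechanical check over the eleven surviving rows of Table~\ref{tab:ASposs}.
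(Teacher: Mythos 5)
There is a genuine gap, in two places. First, your reduction to ``bounding $|C\cap S|$'' rests on the claim that $|\overline{C}|$ is a bounded constant, at most $12$, read off from Table~\ref{tab:ASposs}. That is false: Lemma~\ref{lem:boundS2} leaves rows $(15$--$18)$ on the table, where $|\overline{C}|$ can be as large as $r+1$ or $2r$ with $r$ unbounded. So even with perfect control of $|C\cap S|$ you cannot conclude $|C|=|\overline{C}|\,|C\cap S|\leq 6k$ by treating $|\overline{C}|$ as a constant; you genuinely need an inequality that plays $|\overline{C}|^2$ off against $|\overline{G}|$ (the paper reduces to $|\overline{G}|\geq 2^{t-1}|\overline{C}|^2|S|_{2'}$ and checks it row by row, using e.g.\ $|\PGL(2,r)|=(r+1)r(r-1)$ versus $|\overline{C}|\leq r+1$).

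Second, the step you yourself flag as the crux --- bounding $|C\cap S|$ (equivalently $|S|_{2'}$, since $|C\cap S|$ is odd by Lemma~\ref{thm:quosolradical}) by ``re-running the soluble machinery on $S$'' --- does not go through as described and is where all the real work lies. Lemma~\ref{IstvanLemma2} concerns a Sylow $p$-subgroup of $G$ that is normal in $G$; a Sylow $p$-subgroup of $S$ need not be a Sylow $p$-subgroup of $G$ (the same prime can divide $|T|$), and the key mechanism there --- the involution $c$ failing to centralise $P_p$ because otherwise a normal subgroup containing $c$ would be vertex-transitive --- does not transfer to the merely semiregular subgroup $S$ without substantial new argument. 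The paper's actual route is entirely different: it analyses the action of $G^\infty$ on a characteristic series of $S$, uses Dickson's classification and Lemma~\ref{perfectActing} to show $G^\infty$ centralises $S$, concludes that $G^\infty$ is quasisimple so that $S\cap G^\infty$ sits inside the Schur multiplier of $T$, and then bounds $|S|_{2'}$ by $3$ (splitting into the cases where $G^\infty$ is or is not semiregular). Nothing in your proposal supplies a substitute for this structural step, so the proof is incomplete at its central point. Your opening reduction ($n=k|C|$, so it suffices to prove $|C|\leq 6k$, and $|C|=|\overline{C}|\,|C\cap S|$ with $|C\cap S|$ odd) does agree with the paper and is correct.
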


\begin{proof}
By Lemma~\ref{lem:boundS2}, $\overline{G}$, $|\overline{C}|$,  $\log_2|\overline{G}:\overline{C}|_2$ and upper bounds for $t$ and $|S|_2$ are as in Table~\ref{tab:ASposs}. Write $G=S.T.A$. Note that $\overline{G}\cong T.A$ and we can read off $A$ from Table~\ref{tab:ASposs}. In fact, $|A|\leq 2$, unless $\overline{G}\cong\PGammaL(2,r^2)$, in which case $|A|=4$. We denote by $G^\infty$ the last term of the derived series of $G$. By the Schreier conjecture, $G^\infty\cong Y.T$ for some normal subgroup $Y$ of $S$. Let $$1=S_0\norml S_1\norml \cdots\norml S_n=S$$ be a maximal characteristic series for $S$. For every $i\in\{0,\ldots,n-1\}$, let $\phi_i:G\rightarrow \Aut(S_{i+1}/S_i)$ and let $K_i$ be the kernel of $\phi_i$ in $G^\infty$.

Suppose that $\phi_i(G^\infty)$ is insoluble for some $i$. Since $S_{i+1}/S_i$ is characteristically simple, it is elementary abelian, say $S_{i+1}/S_i\cong\ZZ_p^a$. By Table~\ref{tab:ASposs}, $|S|_2\leq 4$. Together with  Lemma~\ref{basic-pty}, this implies that $a\leq 2$. Since $\phi_i(G^\infty)$ is insoluble, $a=2$, $p\geq 5$ and $\Aut(S_{i+1}/S_i)\cong\GL(2,p)$. By Dickson's classification of subgroups of $\PSL(2,p)$ \cite{dickson}, either $\SL(2,p)\leq \phi_i(G^\infty)$ or $\SL(2,5)\leq \phi_i(G^\infty)\leq\phi_i(G)\leq\SL(2,5)\circ\ZZ_{p-1}$. In the latter case, $\overline{G}\cong\Alt(5)$ and $|S|$ is even, contradicting Table~\ref{tab:ASposs}. Thus $\SL(2,p)\leqslant \phi_i(G^\infty)$ and $T=\PSL(2,p)$. By \cite[Table I]{bell}, an extension of $\ZZ_{p}^2$ by $\SL(2,p)$ splits hence $G$ contains a group of order $p^3$ and exponent $p$ as a section, contradicting Lemma~\ref{basic-pty}. 

It follows that $G^\infty/K_i\cong\phi_i(G^\infty)$ is soluble for every $i$. Since $G^\infty$ is perfect, it follows that $G^\infty=K_i$ and thus $\phi_i(G^\infty)=1$. Since this is true for every   $i\in\{0,\ldots,n-1\}$, it follows by Lemma~\ref{perfectActing} and induction that $G^\infty\leqslant C_G(S)$ and $G^\infty\cap S\leq\Z(G^\infty)$. On the other hand, $\Z(G^\infty)$ is an abelian normal subgroup of $G$ hence  $\Z(G^\infty)\leq S$ and thus $G^\infty\cap S=\Z(G^\infty)$.  In particular, $G^\infty/\Z(G^\infty)=G^\infty/(G^\infty\cap S)\cong G^\infty S/S=T$. Since $T$ is simple, we conclude that $G^\infty$ is quasisimple. In particular, $Y$ is a subgroup of the Schur multiplier of $T$. 

We want to show that the order of $\Gamma$ is at most $6k^2$. This is equivalent to $|C|\leq 6k=\frac{6|G|}{|G_v||C|}=\frac{|G|}{2^{t-1}|C|}$ and thus to $|G|\geq 2^{t-1}|C|^2$. On the other hand, $|C|=|\overline{C}||C\cap S|$ but $|C\cap S|$ is odd by Lemma~\ref{thm:quosolradical}(\ref{quoradical1}) hence $|C|\leq |\overline{C}||S|_{2'}$. Since $|G|=|\overline{G}||S|\geq |\overline{G}||S|_{2'}$, it thus suffices to show that 
\begin{equation}\label{coolnewbound}
|\overline{G}|\geq 2^{t-1}|\overline{C}|^2|S|_{2'}.
\end{equation}

Suppose first that $G^\infty$ is semiregular on the vertices of $\Gamma$. This implies that $|G/G^\infty|_2\geq 2^t$. On the other hand, $G/G^\infty\cong (S/Y).A$ hence $|G/G^\infty|_2\leq |S|_2|A|$.  Combining this with Lemma~\ref{thm:quosolradical}(\ref{quoradical1}), we get $|G/G^\infty|_2|\overline{G}:\overline{C}|_2\leq 2^t|A|\leq |G/G^\infty|_2|A|$ and thus $|\overline{G}:\overline{C}|_2\leq |A|$. By running through Table~\ref{tab:ASposs}, we find that $\overline{G}\cong\PGL(2,r)$ with $r\geq 5$ and $|A|=|\overline{G}:\overline{C}|_2=2$. (Note that this includes the case $\overline{G}\cong\Sym(5)$.) Using the previous inequalities, this implies that $|G/G^\infty|_2= 2^t$ and thus $G^\infty$ has an odd number of orbits. Since $G^\infty$ is semiregular, it follows that it is transitive hence $|(S/Y).A|=|G/G^\infty|=|G_v|=3\cdot 2^t$ and Lemma~\ref{lemma:faithful} implies $t\leq 1$. Since $|A|=2$, we have $t=1$ and $|S/Y|=3$. On the other hand, since the Schur multiplier of $\PSL(2,r)$ has order $2$, we see that $|Y|_{2'}=1$ and thus $|S|_{2'}=3$. Now, $|\overline{G}|=(r+1)r(r-1)$ while $|\overline{C}|\leq r+1$ hence~\eqref{coolnewbound} is satisfied.

We may thus assume that $G^\infty$ is not semiregular on the vertices of $\Gamma$. In particular, $G^\infty$ is locally transitive and has at most two orbits on the vertices of $\Gamma$. It follows that $G/G^\infty$ is a $2$-group hence so is $S/Y$ and thus $|S|_{2'}=|Y|_{2'}$. Now, by considering the Schur multiplier of $T$, we find that $|Y|_{2'}=1$ unless $T$ is isomorphic to  $\Alt(6)$ or $\Alt(7)$, when we may have $|Y|_{2'}=3$. It is then a matter of routine to go through Table~\ref{tab:ASposs} and verify that~\eqref{coolnewbound} is satisfied.
\end{proof}

%We note that the arc-transitive cubic Cayley graphs of $\PSL(2,r)$ were determined by Du and Wang \cite{DuWang}.

%\begin{lemma}
%Let $k$ be a positive integer.  Then there are only finitely many $G$-arc-transitive or $G$-vertex-regular cubic graphs with %$G=\PSL(2,r)$ or $\PGL(2,r)$ for some prime $r$ such that $G$ contains a semiregular cyclic subgroup with $k$ orbits. 
%\end{lemma}
%\begin{proof}
%Let $\Gamma$ be such a graph. If $G$ is vertex-regular then $|V\Gamma|=|G|$ and $|C|=r$ or $|C|$ divides $r\pm 1$. in %particular, $k$ is divisible by $(r\pm 1)/2$. Thus only finitely many values of $r$ are possible and hence there are only %finitely many possibilities for $\Gamma$.%
%
%
%If $G$ is arc-transitive then $|V\Gamma|=|G|/3.2^t$ for some $t$ with $0\leqslant t\leqslant 4$. Again we have that $|C|=r$ %or $|C|$ divides $r\pm 1$. Hence $k$ is divisible by $(r\pm 1)/3.2^{t+1}$.  Thus there are only finitely many values of $r$ %possible and so only finitely many possibilities for $\Gamma$.
%
%\end{proof}

\noindent\textsc{Acknowledgements.}
We would like to thank Gordon Royle for his help with the computations mentioned at the end of the introduction, and Luke Morgan for pointing out Lemma~\ref{perfectActing}.

\end{document}